\tikzstyle{vertex}=[auto=left,circle,draw=black,fill=white, inner sep=1.5]
\newtheorem{theorem}{Theorem}[section]
\newtheorem{prop}[theorem]{Proposition}
\newtheorem{lemm}{Lemma}[section]
\newtheorem{ex}{Example}[section]
\title{Signed Chromatic Polynomials of Signed Book Graphs}
\author{ Deepak\\
Department of Mathematics\\
Indian Institute of Technology Guwahati\\
Guwahati, India - 781039\\
Email: deepakmath55555@iitg.ac.in\\
\\
Bikash Bhattacharjya\\
Department of Mathematics\\
Indian Institute of Technology Guwahati\\
Guwahati, India - 781039\\
Email: b.bikash@iitg.ac.in
}
\begin{document}
\maketitle

\vspace{-0.3cm}
\noindent
\textbf{Abstract}.  In 2015, Matthias Beck and his team developed a computer program in SAGE which efficiently determines the number of signed proper $k$-colorings for a given signed graph. In this article, we determine the number of different signatures on Book graph up to switching isomorphisms. We also find a recursive formula of the signed chromatic polynomials of signed Book graphs.

\noindent {\textbf{Keywords}: Signed graph, balance, switching isomorphism, Book graph, signed chromatic number, signed chromatic polynomial. 

\section{Introduction}\label{intro}

Signed graph is a graph with positive or negative sign label to its edges. Signed graph has been evolved as a generalisation of ordinary graph because all edges of ordinary graph can be considered positive or negative. In a signed graph, set of negative edges is called the signature of that signed graph. Further, it is clear that if a graph has $n$ vertices and $m$ edges then there are $2^{m}$ ways to put a sign on its edges. Some mathematicians have determined the exact number of signatures on some graphs up to switching isomorphism. Zaslavsky~\cite{T.Zaslavsky} had shown that there are only six non-isomorphic signatures on Petersen graph. V.Sivaraman~\cite{Sivaraman} determined that there are only seven signed Heawood graphs up to switching isomorphism. Up to switching isomorphism, there are two signed $K_3$'s, three signed $K_4$'s and seven signed $K_5$'s. We infer that finding the exact number of non-isomorphic signatures on some graphs help us to understand the graph more rigorously. Zaslavsky~\cite{Zaslavsky2} introduced the notion of signed graph coloring of signed graphs and he emphasized that for coloring a signed graph, signed colors are needed. Further, there is a chromatic polynomial of signed graphs with similar enumerative structure as for ordinary graphs. In~\cite{Beck2}, Zaslavsky also showed that signed graphs have two signed chromatic polynomials. In \cite{Beck}, the authors published a SAGE code which produces the signed chromatic polynomial as output when a signed graph is given as input. In \cite{Zaslavsky3} and \cite{Brian}, authors showed that how the hyperplane arrangements are useful to find the signed chromatic polynomial of a given signed graph. We will use this hyperplane arrangements idea to calculate the signed chromatic polynomials of some signed graphs, and these chromatic polynomials will be used to determine the signed chromatic polynomials of signed book graph. In this paper, we determine the number of different signatures on Book graph up to switching isomorphism, and also calculate the signed chromatic polynomials of signed Book graphs. We divide this paper into two parts. The first part determines the signatures on Book graph up to switching isomorphisms. The second part determines the signed chromatic polynomials of signed Book graphs.

\section{Preliminaries}\label{prelim}

A \textit{signified graph} is a graph $G$ together with an assignment of $+$ or $-$ signs to its edges. If $\Sigma$ is the set of negative edges, then we denote the signified graph by $(G, \Sigma)$. The set $\Sigma$ is called the signature of $(G, \Sigma)$. Signature $\Sigma$ can also  be viewed as a function from $E(G)$ into $\{+1, -1\}$. A \textit{resigning (switching)} of a signified graph at a vertex $v$ is to change the sign of each edge incident to $v$. We say $(G,\Sigma_{2})$ is \textit{switching equivalent}, or simply equivalent, to $(G,\Sigma_{1})$ if it is obtained from $(G,\Sigma_{1})$ by a sequence of resignings. In other words, we say that $(G,\Sigma_{2})$ is (switching) equivalent to $(G,\Sigma_{1})$ if there exists a function $f : V \rightarrow \{+1, -1\}$ such that $\Sigma_{2}(e) = f(u)\Sigma_{1}(e)f(v)$ for each edge $e=uv$. Switching defines an equivalence relation on the set of all signified graphs over $G$ (also on the set of signatures). Each equivalence class of this equivalence relation is called a \textit{signed graph} and is denoted by $[G,\Sigma]$, where $(G,\Sigma)$ is any member of the class. Many properties of signified graphs are well known to be invariant under switching.

\begin{prop} \cite{Naserasr} 
  If $G$ has $m$ edges, $n$ vertices and $c$ components, then there are $2^{(m-n+c)}$ distinct signed graphs over $G$.
\end{prop}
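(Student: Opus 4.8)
The plan is to recast the whole situation in the language of $\Fl_2$-vector spaces, where switching becomes translation by a subspace, so that counting signed graphs reduces to computing a subgroup index. First I would identify the set of all signatures on $G$ with the group $\Fl_2^{E(G)} \cong \Fl_2^m$, writing a signature additively as a vector indexed by edges (a negative edge corresponding to the entry $1$ and a positive edge to $0$). Under this identification the switching functions $f : V(G) \to \{+1,-1\}$ likewise form a group $\Fl_2^{V(G)} \cong \Fl_2^n$, and switching by $f$ sends a signature $\Sigma$ to $\Sigma + \delta f$, where $\delta f \in \Fl_2^m$ is the vector assigning to each edge $e=uv$ the value $f(u)+f(v)$ (equivalently, $\delta f$ is the signature obtained by switching the all-positive signature by $f$). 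The relation $\Sigma_2(e)=f(u)\Sigma_1(e)f(v)$ becomes precisely $\Sigma_2=\Sigma_1+\delta f$, so the map $\delta:\Fl_2^n\to\Fl_2^m$ is $\Fl_2$-linear and two signatures are switching equivalent exactly when they differ by an element of its image $B:=\operatorname{im}\delta$.

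Consequently the signed graphs over $G$ are exactly the cosets of $B$ in $\Fl_2^m$, and their number is the index $[\Fl_2^m:B]=2^m/|B|$. It then remains to compute $|B|$, which by rank--nullity equals $2^n/|\ker\delta|$. This is the heart of the argument: I must determine $\ker\delta$. A function $f$ lies in $\ker\delta$ iff $f(u)+f(v)=0$ for every edge $uv$, i.e.\ $f$ takes equal values on the two endpoints of every edge, which is equivalent to $f$ being constant on each connected component of $G$. Thus $\ker\delta$ is in bijection with the set of functions from the $c$ components of $G$ into $\Fl_2$, so $|\ker\delta|=2^c$. Combining these, $|B|=2^n/2^c=2^{n-c}$, and therefore the number of signed graphs over $G$ is $2^m/2^{n-c}=2^{m-n+c}$, as claimed.

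The one place demanding genuine care is the kernel computation, and in particular the passage from ``$f$ agrees across every edge'' to ``$f$ is constant on each component'': here one uses that any two vertices in the same component are joined by a path, so the equality $f(u)=f(v)$ propagates along the path, while vertices in distinct components are left entirely unconstrained. (Isolated vertices, which increase both $n$ and $c$ by one without affecting $m$, are handled automatically, since $m-n+c$ is unchanged by them.) Everything else---the $\Fl_2$-linearity of $\delta$, the identification of switching orbits with cosets of $B$, and the final index count---is routine once this linear-algebraic framework is in place.
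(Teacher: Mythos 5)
Your proof is correct and complete. The paper itself gives no proof of this proposition --- it is quoted from Naserasr, Rollov\'a and Sopena --- but your argument is exactly the standard one in the literature (going back to Zaslavsky): signatures form $\mathbb{F}_2^{E}$, switching classes are the cosets of the image of the coboundary map $\delta:\mathbb{F}_2^{V}\to\mathbb{F}_2^{E}$, and the kernel of $\delta$ consists of the functions constant on components, giving $\lvert\operatorname{im}\delta\rvert=2^{n-c}$ and hence $2^{m}/2^{n-c}=2^{m-n+c}$ classes. The one point worth flagging is that you implicitly use that the graph is loopless (a loop $e$ at $v$ has $\delta f(e)=f(v)+f(v)=0$, so its sign is switching-invariant), which is fine here since the paper restricts to simple unsigned graphs.
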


We say that a cycle in a signed graph is \emph{balanced} if the product of signs of edges of that cycle is positive and unbalanced, otherwise. One of the first theorems in the theory of signed graphs is that the set of unbalanced cycles uniquely determines the class of signed graphs to which a signified graph belongs. More precisely, we state the following theorem.

\begin{theorem}\cite{Zaslavsky}
\label{Signature}
Two signatures $\Sigma_{1}$ and $\Sigma_{2}$ are switching equivalent if and only if they have the same set of unbalanced cycles.
\end{theorem}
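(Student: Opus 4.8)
The plan is to prove the two implications separately, with the reverse direction carrying essentially all of the difficulty.

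For the forward (``only if'') direction, I would show that the balance of each individual cycle is a switching invariant. Since switching equivalence is generated by single resignings, it suffices to check invariance under one resigning at a vertex $v$. Fix a cycle $C$. If $v\notin V(C)$, then resigning at $v$ changes no edge of $C$. If $v\in V(C)$, then exactly two edges of $C$ are incident to $v$ (a cycle is $2$-regular), so resigning at $v$ flips the signs of precisely these two edges. In either case the product $\prod_{e\in C}\Sigma(e)$ is multiplied by $+1$ or by $(-1)^2=+1$, hence is unchanged. Thus the balance of every cycle is preserved, and switching-equivalent signatures have the same set of unbalanced cycles.

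For the reverse direction, the key device is to pass to the product signature $\sigma$ defined edgewise by $\sigma(e)=\Sigma_1(e)\Sigma_2(e)$. Since each $\Sigma_i(e)\in\{+1,-1\}$, the relation $\Sigma_2(e)=f(u)\Sigma_1(e)f(v)$ is equivalent to $\sigma(e)=f(u)f(v)$; hence $\Sigma_1$ and $\Sigma_2$ are switching equivalent if and only if $\sigma$ is switching equivalent to the all-positive signature. Moreover, for every cycle $C$ we have $\prod_{e\in C}\sigma(e)=\bigl(\prod_{e\in C}\Sigma_1(e)\bigr)\bigl(\prod_{e\in C}\Sigma_2(e)\bigr)$, and the hypothesis that $\Sigma_1,\Sigma_2$ share the same unbalanced cycles means these two factors agree for each $C$, so their product is $+1$ and every cycle is balanced under $\sigma$. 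The reverse direction therefore reduces to the balance theorem: a signature all of whose cycles are balanced is switching equivalent to the all-positive signature.

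To prove this reduced statement I would argue one component at a time (switching functions on distinct components are independent), so assume $G$ connected and fix a spanning tree $T$ rooted at a vertex $r$. Define $f:V\to\{+1,-1\}$ by $f(r)=+1$ and, for every other $v$, $f(v)=\prod\sigma(e)$ over the edges $e$ on the unique $T$-path from $r$ to $v$. A direct computation shows that after resigning by $f$ every tree edge becomes positive: if $v$ is the child of $u$ along $T$, the new sign is $f(u)\sigma(uv)f(v)=f(u)\sigma(uv)\cdot f(u)\sigma(uv)=+1$. For a non-tree edge $e=uv$, the fundamental cycle $C_e$ formed by $e$ and the $T$-path between $u$ and $v$ remains balanced (balance being a switching invariant by the forward direction); since all tree edges of $C_e$ are now positive, balance forces the new sign of $e$ to be positive as well. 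Hence every edge is positive after resigning, so $\sigma$ is switching equivalent to the all-positive signature, completing the proof. The main obstacle is precisely this spanning-tree construction: one must produce an explicit switching function and then exploit the balance of the fundamental cycles to pin down the non-tree edges. The reduction to $\sigma$ is what makes this tractable, converting ``same unbalanced cycles'' into the single clean condition ``every cycle balanced.''
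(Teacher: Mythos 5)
Your proof is correct. Note that the paper does not prove this statement at all --- it is quoted as a known result with a citation to Zaslavsky's \emph{Signed graphs}, so there is no in-paper argument to compare against. What you give is the standard proof: the forward direction via the observation that a resigning at $v$ flips exactly zero or two edges of any cycle, and the reverse direction by reducing to the balance theorem through the product signature $\sigma(e)=\Sigma_1(e)\Sigma_2(e)$ and then using the spanning-tree potential $f(v)=\prod\sigma(e)$ over the tree path, with balance of the fundamental cycles forcing the non-tree edges positive. All steps check out, including the component-wise reduction and the use of the already-established switching invariance of balance to handle the non-tree edges.
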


Throughout this paper, all unsigned graphs are simple. However, parallel edges or negative loops will be allowed for signed graphs. A solid line in a signed graph represents positive edge and a dashed line represents negative edge. For all other graph theoretic terms that are used but not defined in this article, we refer the reader to~\cite{Bondy}.

\section{Definitions and Notations}\label{notations}

For $m \geq 3$ and $n \geq 1$, the $m$-cycle \textit{Book graph} $B(m,n)$ consists of $n$ copies of the cycle $C_m$ \linebreak[4] with one common edge. The copies of the cycle $C_m$ are called the pages of $B(m,n)$. Let \linebreak[4] $V(B(m,n)) = \{u,~v\} \cup \{u_{k}^l~|~1 \leq l \leq n,~~ 1 \leq k \leq m-2 \}$, so that $uv$ is the edge common to the cycles $C_{m}^{l}$, where $C_{m}^{l} = uu_{1}^{l}u_{2}^{l}u_{3}^{l}...u_{m-3}^{l}u_{m-2}^{l}vu$, for $1 \leq l \leq n$. For example, the cycle $C_{4}^{1}$ in $B(4,3)$ is the cycle $uu_{1}^{1}u_{2}^{1}vu$, where the graph $B(4,3)$ is shown in Figure \ref{BG1}.

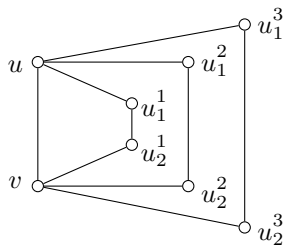
\begin{figure}[ht]
\centering
\begin{tikzpicture}[scale=0.5]
%\draw [help lines] (0,0) grid (85,35);
\node[vertex] (v1) at (9,2) {};
\node [below] at (8.4,2.5) {$v$};
\node[vertex] (v2) at (9,5.3) {};
\node [below] at (8.4,5.6) {$u$};
\node[vertex] (v3) at (11.5,3.1) {};
\node [below] at (12.1,3.6) {$u_{2}^{1}$};
\node[vertex] (v4) at (11.5,4.2) {};
\node [below] at (12.1,4.8) {$u_{1}^{1}$};
\node[vertex] (v5) at (13,2) {};
\node [below] at (13.7,2.5) {$u_{2}^{2}$};
\node[vertex] (v6) at (13,5.3) {};
\node [below] at (13.7,6) {$u_{1}^{2}$};
\node[vertex] (v7) at (14.5,0.9) {};
\node [below] at (15.2,1.5) {$u_{2}^{3}$};
\node[vertex] (v8) at (14.5,6.3) {};
\node [below] at (15.2,7) {$u_{1}^{3}$};

\foreach \from/\to in {v1/v2,v1/v3,v2/v4,v1/v5,v2/v6,v1/v7,v2/v8,v3/v4,v5/v6,v7/v8} \draw (\from) -- (\to);

\end{tikzpicture}
\caption{The Book graph $B(4,3)$.}
\label{BG1}
\end{figure}

Let $\text{Aut}(G)$ denotes the automorphism group of a graph $G$. It is clear that $B(m,1)=C_m$, and therefore $\text{Aut}(B(m,1))\cong D_m$, the dihedral group of order $2m$. Also, $B(m,n) \setminus \{u, v\}$ is just a disjoint union of $n$ copies of the path $P_{m-2}$ on $m-2$ vertices. It is clear that any permutation of these $n$ copies of $P_{m-2}$ determines an automorphism of $B(m,n)$ in the obvious way. Conversely, any non-trivial automorphism of $B(m,n)$ permutes the vertices $u$ and $v$, and also permutes these $n$ copies of $P_{m-2}$. Hence it is easy to see that $\text{Aut}(B(m,n)) \cong S_n \times S_2$, for $n \geq 2$. Thus every automorphism of $B(m,n)$ can only permute the vertices $u$ and $v$, and permute the $n$ pages of the graph. Note that the interchanges of $u$ and $v$ induces interchanges of pair of vertices of each page of the graph.

We say that two signatures $\Sigma_1$ and $\Sigma_2$ on a graph $G$ are \textit{automorphic} if there exists an automorphism $f$ of $G$ such that $uv \in \Sigma_1$ if and only if $f(u)f(v) \in \Sigma_2$. If two signatures are automorphic then they are said to be \textit{automorphic type} signatures. Thus the signatures $\Sigma_1$ and $\Sigma_2$ of a graph $G$ are distinct automorphic type signatures if $\Sigma_1$ is not automorphic to $\Sigma_2$. For instance, the signatures $\{uu^{2}_1\}$ and $\{uu^{1}_1\}$ are automorphic type signatures of $B(4,3)$. However, the signatures $\{uv\}$ and $\{uu^{1}_1\}$ are distinct automorphic type signatures of $B(4,3)$.

We say that two signified graphs $(G, \Sigma_{1})$ and $(H, \Sigma_{2})$ are \textit{isomorphic} if there exists a graph isomorphism $\psi : V(G) \rightarrow V(H)$ which preserve the edge signs. We denote it by $\Sigma_{1} \cong \Sigma_{2}$. Further, they are said to be \textit{switching isomorphic} if $\Sigma_{1}$ is isomorphic to a switching of $\Sigma_{2}$. That is, there exists a representation $(H, \Sigma_{2}^{\prime})$ which is equivalent to $(H, \Sigma_{2})$ such that $\Sigma_{1}$ $\cong$ $\Sigma_{2}^{\prime}$. We denote it by $\Sigma_{1} \sim \Sigma_{2}$. 

\section{Signings on Book Graph}\label{book}

The automorphism group of $B(m,n)$ is discussed in Section~\ref{notations}. We will use this automorphism group to determine the number of different signatures on Book graphs up to switching isomorphisms. Note that a cycle can be either balanced or unbalanced. Therefore, a cycle can have only two signatures up to switching, \textit{viz.,} an empty signature or a signature of size one. If a cycle is unbalanced, then we can make any pre-chosen edge of the cycle negative by suitable switchings. On the basis of this observation, we have the following proposition.

\begin{prop}
\label{prop1}
A signature on $B(m,n)$ is either an empty signature or all the edges of the signature are incident to `$u$' up to switchings. Moreover, the size of such a signature on $B(m,n)$ is at most $\lceil {n/2} \rceil$.
\end{prop}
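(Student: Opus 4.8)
The plan is to exploit the fact, recorded just before the statement, that a single cycle has only two switching types (balanced, giving the empty signature, or unbalanced, in which case a single negative edge may be placed on any pre-chosen edge), together with Zaslavsky's Theorem~\ref{Signature}, which says that a switching class is pinned down by its set of unbalanced cycles. Since $B(m,n)$ has cycle rank $n$, with the $n$ pages $C_m^{1},\dots,C_m^{n}$ forming a natural cycle basis, the whole switching class of any signature is determined by the subset $S\subseteq\{1,\dots,n\}$ of pages that are unbalanced; in particular the balance of every other cycle of $B(m,n)$ is a consequence of the balances of the pages.

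First I would produce a representative in which all negative edges touch $u$. Consider the spanning tree $T$ consisting of the common edge $uv$ together with, for each page $l$, every edge of $C_m^{l}$ except $uu_1^{l}$; its chords are exactly the $n$ edges $uu_1^{1},\dots,uu_1^{n}$, and the fundamental cycle of the chord $uu_1^{l}$ is precisely the page $C_m^{l}$. By switching (concretely, by setting the switching function at each vertex to be the product of the signs along the tree-path from a fixed root) I may assume every edge of $T$ is positive. Then each chord $uu_1^{l}$ is negative exactly when page $l$ is unbalanced, so the negative edges are $\{uu_1^{l}:l\in S\}$, all incident to $u$. When $S=\emptyset$ this representative is empty, which settles the first assertion.

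The size bound is the real content. The representative just built has $uv$ positive and size $|S|$, which can be as large as $n$, so a second representative is needed. Here I would switch once at the vertex $u$: this flips $uv$ and every $uu_1^{l}$, turning the negative set into $\{uv\}\cup\{uu_1^{l}:l\notin S\}$, again entirely incident to $u$, now of size $n+1-|S|$. Both representatives have unbalanced-page set $S$, hence are switching equivalent by Theorem~\ref{Signature}, and keeping the smaller of the two gives a representative of size $\min\bigl(|S|,\,n+1-|S|\bigr)$. It remains to check the elementary inequality $\min(s,\,n+1-s)\le\lceil n/2\rceil$ for every integer $0\le s\le n$: if $s\le\lceil n/2\rceil$ the first term already works, and otherwise $n+1-s\le\lfloor n/2\rfloor\le\lceil n/2\rceil$.

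The step I expect to be the main obstacle --- or at least the one carrying the idea --- is the use of the shared edge $uv$ via the switch at $u$. Concentrating the negativity of each unbalanced page on the edge $uu_1^{l}$ only bounds the size by $n$; the complementation $S\mapsto\{l:l\notin S\}$ (paid for by making $uv$ negative) is what halves the worst case and produces the ceiling $\lceil n/2\rceil$. The rest is the routine bookkeeping of checking that $T$ is a spanning tree with the stated fundamental cycles and that the switch at $u$ acts on the edge-signs as claimed.
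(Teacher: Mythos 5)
Your proof is correct and follows essentially the same route as the paper's: concentrate each unbalanced page's negativity on the edge $uu_1^l$ so that all negative edges meet $u$, then use the switch at $u$ (which flips all $n+1$ edges $uv, uu_1^1,\dots,uu_1^n$) to replace a signature of size $s$ by one of size $n+1-s$, and take the smaller. Your spanning-tree normalization and the explicit check that $\min(s,\,n+1-s)\le\lceil n/2\rceil$ merely make rigorous what the paper's proof asserts more tersely.
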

\begin{proof}
Let $\Sigma$ be a signature on $B(m,n)$. If $\Sigma = \emptyset$, then we are done. Now, let $|\Sigma| \geq 1$. Recall that the graph $B(m,n)$ is the union of $n$ copies of $m$-cycles and their intersection is an edge. For each $l=1,2, \cdots, n $, if the cycle $C_{m}^{l}$ in $\left(B(m,n),\Sigma \right)$ is balanced, then we can make all its edges positive by suitable switchings. If the cycle $C_{m}^{l}$ is unbalanced, then we can make the edge $uu_{1}^{l}$ or $uv$ negative and rest of the edges positive by switchings. This proves the first part of the proposition. 

Note that $d(u) = n+1$ in the graph $B(m,n)$. We already found that a signature on $B(m,n)$ is either empty signature or each of its edges is incident to the vertex $u$, up to switchings. Thus, by switching at $u$, if needed, we find a signature of $B(m,n)$ of size  at most $\lceil {n/2} \rceil$. This completes the proof of the proposition. 
\end{proof}

Recall that two signed graphs $[G, \Sigma_{1}]$ and $[H, \Sigma_{2}]$ are isomorphic if there exists a graph isomorphism $\phi : V(G) \rightarrow V(H)$ which preserve the edge signs. Thus if two signed graphs have different number of unbalanced cycles of same length, then they cannot be isomorphic to each other. In the following theorem, we use this fact to compute the number of non-isomorphic signatures on $B(m,n)$. 

Let us denote an empty signature by $\Sigma_0$. Denote by $\Sigma_{l}$ a signature on $B(m,n)$ of size $l$ that does not contain the edge $uv$ but all of its edges are incident to the vertex $u$. Similarly, $\Sigma_{l}^{uv}$ denotes a signature on $B(m,n)$ of size $l$ containing the edge $uv$ and remaining $l-1$ edges are also incident to the vertex $u$.

\begin{theorem}\label{theorem1}
Up to switching isomorphisms, the number of distinct signatures on $B(m,n)$ is $n+1$.
\end{theorem}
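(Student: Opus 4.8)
The plan is to collapse each signed book graph onto a single combinatorial invariant — the number of unbalanced pages — and to show that this invariant is simultaneously a \emph{complete} switching-isomorphism invariant and one that realises exactly the $n+1$ values $0,1,\dots,n$. For a signature $\Sigma$ on $B(m,n)$ let $b(\Sigma)$ denote the number of pages $C_m^l$ that are unbalanced. The first thing I would record is that, because $m\ge 3$, the only cycles of length $m$ in $B(m,n)$ are the $n$ pages: every internal vertex $u_k^l$ has degree $2$, so a cycle either lies inside a single page or runs through both $u$ and $v$ along two distinct pages, and in the latter case it has length $2(m-1)>m$. Hence $b(\Sigma)$ is precisely the number of unbalanced $m$-cycles.

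From here I would establish that $b$ is invariant under switching isomorphism. Invariance under resigning is immediate from Theorem~\ref{Signature}: switching leaves the set of unbalanced cycles unchanged, and in particular leaves its length-$m$ part unchanged. Invariance under a signified-graph isomorphism is equally clean, since such an isomorphism carries $m$-cycles bijectively to $m$-cycles and preserves the product of edge signs around every cycle, hence preserves the number of unbalanced pages. Because the signature with negative edge set $\{uu_1^1,\dots,uu_1^k\}$ renders exactly the pages $C_m^1,\dots,C_m^k$ unbalanced (each then carries a single negative edge) while leaving the remaining pages balanced, every value $b\in\{0,1,\dots,n\}$ is attained. These configurations therefore supply at least $n+1$ pairwise non-switching-isomorphic signed graphs.

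The heart of the argument is the converse: any two signatures with the same value of $b$ are switching isomorphic. Given $\Sigma$ with $b(\Sigma)=k$, I would first invoke Proposition~\ref{prop1} to switch $\Sigma$ into the standard form in which every negative edge is incident to $u$, and then switch at $u$ if necessary so that $uv\notin\Sigma$. A direct check then shows that a page $C_m^l$ is unbalanced exactly when $uu_1^l$ is negative, so the negative edge set is forced to be $\{uu_1^l : C_m^l \text{ is unbalanced}\}$, a set of size $k$. Thus after normalisation $\Sigma$ is determined, up to relabelling, by its $k$-element set of unbalanced pages, and a suitable page-permutation from the $S_n$ factor of $\text{Aut}(B(m,n))$ (described in Section~\ref{notations}) carries it to the canonical representative $\Sigma_k$. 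Consequently every signature is switching isomorphic to exactly one of $\Sigma_0,\Sigma_1,\dots,\Sigma_n$, which closes the count at $n+1$.

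The step I expect to be the main obstacle is the normalisation in this converse: one must be sure that reducing to the standard form of Proposition~\ref{prop1} and then removing $uv$ really pins the negative edge set down to $\{uu_1^l : l \text{ unbalanced}\}$, rather than leaving some other configuration with the same page-balance pattern. What makes this work is precisely that the unbalanced-page set is a switching invariant, so it is untouched by the normalising switchings and thereby determines the normal form uniquely. A secondary point requiring care is the bookkeeping between the two families $\Sigma_l$ and $\Sigma_l^{uv}$ introduced in Section~\ref{book}: for odd $n$ they coincide at the middle value $b=(n+1)/2$, and passing to the single invariant $b$ conveniently absorbs this coincidence so that no switching-isomorphism class is counted twice.
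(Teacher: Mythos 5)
Your proof is correct and rests on the same three ingredients as the paper's: the normalization of Proposition~\ref{prop1} (plus a switch at $u$), the page-permuting automorphisms from Section~\ref{notations}, and the count of unbalanced $m$-cycles as the distinguishing invariant. The only real difference is organizational --- by promoting the number of unbalanced pages to an explicit \emph{complete} invariant you absorb the paper's parity case-split and its two families $\Sigma_l$, $\Sigma_l^{uv}$ into a single statement, which is a cleaner packaging of essentially the same argument.
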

\begin{proof}
We consider two cases according as $n$ is odd or even.\\
\noindent
\textbf{Case 1($n=2k+1$)}. According to Proposition~\ref{prop1}, the size of a signature on $B(m,2k+1)$ is at most $k+1$, and each edge of such a signature is incident to vertex $u$, up to switchings. It is easy to see that any two signatures of size $l$ which do not contain the edge $uv$ are automorphic, and $\Sigma_{l} = \{uu_{1}^{1}, uu_{1}^{2}, \cdots , uu_{1}^{l}\}$ is one of such signatures, where $1 \leq l \leq k$. Further, any two signatures of size $l$ which contain the edge $uv$ are automorphic, and $\Sigma_{l}^{uv} = \{uv, uu_{1}^{1}, \cdots , uu_{1}^{l-1}\}$ is one of such signatures, where $1 \leq l \leq k$. It is clear that the number of unbalanced $m$-cycles in $\left(B(m,2k+1),\Sigma_{l}\right)$ and $\left(B(m,2k+1),\Sigma_{l}^{uv}\right)$ are $l$ and $(2k+1)-(l-1) = 2k+2-l$, respectively. Since $1\leq l \leq k$, the numbers $l$ and $2k+2-l$ cannot be same. This shows that the signatures $\Sigma_{l}$ and $\Sigma_{l}^{uv}$ have different number of unbalanced $m$-cycles. Hence for each $l$, the signatures $\Sigma_{l}$ and $\Sigma_{l}^{uv}$ cannot be isomorphic, where $1 \leq l \leq k$.

 Further, any two signatures of size $k+1$ which do not contain the edge $uv$ are automorphic to each other and any two signatures of size $k+1$ which contain the edge $uv$ are also automorphic to each other. Since $d(u) = 2k+2$ in $B(m,2k+1)$, resigning at $u$ transforms $\Sigma_{k+1}$ to a signature automorphic to $\Sigma_{k+1}^{uv}$. Hence we have only one signature on $B(m,2k+1)$ of size $k+1$ up to switching isomorphism. Further, the number of unbalanced $m$-cycles in $[B(m,2k+1), \Sigma_{k+1}]$ is $k+1$. So the signature $\Sigma_{k+1}$ is not isomorphic to $\Sigma_{l}$ or $\Sigma_{l}^{uv}$, where $1 \leq l \leq k$.
 
Thus the signatures $\Sigma_{0}, \Sigma_{l}, \Sigma_{l}^{uv}$ and $\Sigma_{k+1}$ are pairwise non-switching isomorphic, where $1 \leq l \leq k$. This shows that $B(m,2k+1)$ has exactly $2k+2$ non-switching isomorphic signatures.

\noindent
\textbf{Case 2 ($n=2k$)}.
Proposition~\ref{prop1} tells us that the size of a signature on $B(m,2k)$ is at most $k$, and each edge of such a signature is incident to vertex $u$, up to switchings. It is easy to see that any two signatures of size $l$ which do not contain the edge $uv$ are automorphic and the number of unbalanced $m$-cycles in $B(m,2k)$ with such a signature is $l$. Similarly, any two signatures of size $l$ which contain the edge $uv$ are automorphic and the number of unbalanced $m$-cycles in $B(m,2k)$ with such a signature is $2k-(l-1) = 2k+1-l$, where $1 \leq l \leq k$. The numbers $l$ and $2k+1-l$ can never be same, since $l$ satisfies $1 \leq l \leq k$. Thus for each $l$, the signatures $\Sigma_{l}$ and $\Sigma_{l}^{uv}$ on $B(m,2k)$ are non-switching isomorphic, where $1 \leq l \leq k$. This proves that the signatures $\Sigma_{0}, \Sigma_{l}$ and $\Sigma_{l}^{uv}$ are pairwise non-switching isomorphic, where $1 \leq l \leq k$. Hence the graph $B(m,2k)$ has exactly $2k+1$ non-switching isomorphic signatures.\\
This completes the proof of the theorem.
\end{proof}

\section{Preliminaries for Signed Coloring}\label{signed-coloring}
Recall that a \textit{signed graph} $(G,\sigma)$ consists of an unsigned graph $G$, whose vertex set is $V(G)$ and edge set is $E(G)$, and a sign function $\sigma$ which labels each edge as positive or negative. A coloring of an ordinary graph in $k$ colors is a mapping of the vertex set of the graph into the set $[k] = \{1, 2,...,k\}$. However, for the coloring of signed graphs we must have signed colors (see~\cite{Zaslavsky2} for details). If $(G,\sigma)$ is a signed graph, we define a signed coloring of $(G,\sigma)$ in $2k+1$ \textit{signed colors} to be a mapping 
$$c~:~V(G) \longrightarrow \{-k, -k+1,...,0,...,k-1,k\}.$$ 

A signed coloring is \textit{zero-free} or \emph{balanced} if it never takes the value zero. To know more about the difference between these two signed colorings \textit{viz.,} coloring and zero-free coloring, see~\cite{Zaslavsky2}. A signed coloring of a signed graph is \textit{proper} if $c(v) \neq \sigma(e)c(u)$ whenever there is an edge $e = uv$. The condition $c(v) \neq \sigma(e)c(u)$ implies that $c(v) \neq 0$ if there is a negative loop at $v$.

The \textit{chromatic polynomial} $\chi_{(G,\sigma)}(\lambda)$ of a signed graph $(G,\sigma)$ is the function defined for odd positive arguments $\lambda = 2k+1$, whose value equals the number of proper signed colorings of $(G,\sigma)$ in $2k+1$ signed colors. Similarly, the \textit{balanced chromatic polynomial} $\chi_{(G,\sigma)}^{b}(\lambda)$ of $(G,\sigma)$, defined for even positive arguments $\lambda = 2k$, is the function whose value equals the number of proper zero-free signed colorings of $(G,\sigma)$ in $2k$ signed colors.

In~\cite{Zaslavsky2}, the authors proved that the chromatic number and the chromatic polynomials of a signed graph are invariant under switchings. In perfect analogy to ordinary graph coloring theory, we have the following theorems.

\begin{theorem}~\cite{Zaslavsky2}
If $(G,\sigma)$ is a signed graph on $n$ vertices, then $\chi_{(G,\sigma)}(\lambda)$ and $\chi_{(G,\sigma)}^b(\lambda)$ are monic polynomial functions of $\lambda$ of degree $n$.
\end{theorem}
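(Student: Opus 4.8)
The plan is to prove the two claims (polynomiality and degree) by induction-free direct counting, working color-by-color, since the combinatorial definition of $\chi_{(G,\sigma)}(\lambda)$ at $\lambda = 2k+1$ and $\chi_{(G,\sigma)}^{b}(\lambda)$ at $\lambda = 2k$ already hands us explicit counting functions; the task is to show these counting functions agree with monic degree-$n$ polynomials on the relevant arithmetic progressions. First I would recall that since both quantities are switching-invariant, it suffices to establish the result for any convenient representative of $[G,\sigma]$, and in particular I may assume the graph is simple with the only possible loops being negative loops, as in the paper's conventions.

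For the structural argument I would use deletion-contraction, the signed analogue of the ordinary chromatic-polynomial recursion, which is the natural engine here. The key steps, in order, are: (i) set up the recursion $\chi_{(G,\sigma)}(\lambda) = \chi_{(G\setminus e,\sigma)}(\lambda) - \chi_{(G/e,\sigma)}(\lambda)$ for an ordinary (non-loop) edge $e$, where contraction in the signed setting is performed after switching $e$ to be positive, so that the identification of endpoints is well defined; (ii) handle the base cases, namely the edgeless graph on $n$ vertices, for which every one of the $\lambda$ colors is independently allowed at each vertex so that $\chi_{(G,\sigma)}(\lambda) = \lambda^{n}$, manifestly monic of degree $n$, and a single negative loop, which forbids exactly the color $0$ and contributes the factor $(\lambda - 1)$ at the value $\lambda = 2k+1$; (iii) observe that each application of the recursion reduces the number of edges by one while $G\setminus e$ has $n$ vertices and $G/e$ has $n-1$ vertices, so by strong induction on the number of edges the two terms are polynomials of degree $n$ and $n-1$ respectively, forcing $\chi_{(G,\sigma)}(\lambda)$ to be a polynomial of degree $n$ whose leading coefficient is inherited unchanged from the degree-$n$ term. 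The monicity is preserved at every step precisely because the subtracted contraction term has strictly smaller degree. The identical argument, run against the even values $\lambda = 2k$ and the zero-free coloring rule (which never forbids the value $0$ simply because $0$ is not a color), yields the same conclusion for $\chi_{(G,\sigma)}^{b}(\lambda)$.

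The main obstacle, and the point that deserves the most care, is justifying that the combinatorial counting functions, a priori defined only on the arithmetic progression of odd (respectively even) integers, are in fact the restrictions of genuine polynomials; a finite list of data points does not by itself determine a polynomial, so one must produce the polynomial abstractly. The deletion-contraction recursion resolves exactly this, because it is an identity of counting functions that closes within the class of functions of the form ``count of proper signed colorings,'' and the two base-case functions $\lambda^{n}$ and $\lambda - 1$ are literal polynomials; by induction every $\chi_{(G,\sigma)}$ is then a finite $\pm$-combination of products of such polynomial factors, hence a polynomial in $\lambda$, and the counting interpretation fixes its values on infinitely many arguments, so the polynomial is uniquely determined. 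I would also take care with the signed contraction: switching $e$ positive before contracting can alter the signature on the rest of the graph, but since the chromatic polynomial is switching-invariant this does not affect the counts, and one checks that proper colorings of $(G,\sigma)$ in which the endpoints of $e$ receive the ``compatible'' colors correspond bijectively to proper colorings of the contracted signed graph. Once these bijections and the base cases are pinned down, the degree and monicity statements follow immediately from bookkeeping on degrees in the recursion.
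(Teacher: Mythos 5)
This theorem is imported from Zaslavsky's \emph{Signed graph coloring} via the citation; the paper under review supplies no proof of its own, so there is nothing internal to compare your argument against. On its merits, your deletion--contraction induction is essentially the standard proof and is sound in outline: the recursion $\chi_{(G,\sigma)} = \chi_{(G,\sigma)\setminus e} - \chi_{(G,\sigma)/e}$ closes within the class of signed-coloring counting functions, the base cases are literal polynomials, and the degree bookkeeping (degree $n$ minus degree $n-1$ preserves monicity) is exactly right; you are also right that the real content is showing a function defined only on an arithmetic progression agrees with a polynomial there, which the recursion delivers. For the record, Zaslavsky's own route in the cited paper is the Whitney-rank subset expansion $\chi_{(G,\sigma)}(\lambda) = \sum_{S\subseteq E}(-1)^{|S|}\lambda^{b(S)}$, where $b(S)$ counts balanced components of $(V,S)$; that gives polynomiality and monicity in one stroke (only $S=\emptyset$ contributes degree $n$) without any induction, whereas your recursion has the advantage of reusing machinery the paper already states as Theorem 5.3. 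Two small repairs are needed in your write-up. First, the class of graphs reachable by repeated contraction from a simple signed graph contains same-sign parallel edges, and contracting an edge parallel to another positive edge produces a \emph{positive} loop, for which the chromatic polynomial is identically zero and your induction hypothesis (monic of degree $n-1$) fails; you must interpose the reduction ``delete one of any two parallel edges of the same sign'' (which does not change the counting function) after each contraction, after which only negative loops can ever arise. Second, the all-loops base case contributes the factor $\lambda-1$ per looped vertex for $\chi_{(G,\sigma)}$ but the factor $\lambda$ for $\chi^{b}_{(G,\sigma)}$ (a negative loop forbids only the color $0$, which is absent in the zero-free setting); both are monic of degree one, so the conclusion survives, but the two cases are not literally ``identical'' as claimed.
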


Let $e$ be a positive edge in $(G,\sigma)$. The edge-contraction $(G,\sigma)/e$ is obtained by identifying the end points of $e$ and deleting $e$. We also have the signed analogue of edge deletion-contraction formula of chromatic polynomial for simple unsigned graph.  

\begin{theorem}~\cite{Zaslavsky2} \label{E-C}
Let $(G,\sigma)$ be a signed graph and $e$ be a positive edge in $(G,\sigma)$. Then  
$$\chi_{(G,\sigma)}(\lambda) = \chi_{(G,\sigma) \setminus e}(\lambda) - \chi_{(G,\sigma) /e}(\lambda)~\text{ and }~\chi_{(G,\sigma)}^b(\lambda) = \chi_{(G,\sigma) \setminus e}^b(\lambda) - \chi_{(G,\sigma) /e}^b(\lambda).$$
\end{theorem}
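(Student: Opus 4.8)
The plan is to prove both identities by a direct counting argument at each admissible value of $\lambda$ and then invoke polynomiality to upgrade the numerical identities to identities of polynomials. I will treat the ordinary chromatic polynomial first; the balanced case will follow by repeating the argument verbatim with zero-free colorings.

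Fix $\lambda = 2k+1$ and write $e = uv$. Since $e$ is positive, $\sigma(e) = +1$, so the properness requirement imposed by $e$ on a coloring $c$ is simply $c(u) \neq c(v)$. First I would examine the set $P$ of proper signed colorings of $(G,\sigma)\setminus e$ in $2k+1$ colors; these are precisely the maps $c : V(G) \to \{-k,\ldots,k\}$ satisfying $c(y) \neq \sigma(f)c(x)$ for every edge $f = xy$ other than $e$. I would partition $P$ into two classes according to whether $c(u) \neq c(v)$ or $c(u) = c(v)$. A coloring in the first class satisfies the extra constraint $c(u) \neq c(v)$ imposed by $e$, hence is exactly a proper coloring of $(G,\sigma)$; conversely every proper coloring of $(G,\sigma)$ arises this way. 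This gives $\chi_{(G,\sigma)}(\lambda)$ as the size of the first class.

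The heart of the argument is to identify the second class with the proper colorings of the contraction $(G,\sigma)/e$. Recall that $(G,\sigma)/e$ is obtained by merging $u$ and $v$ into a single vertex $w$ and deleting $e$, keeping the signs of all remaining edges. A coloring $c$ of $(G,\sigma)\setminus e$ with $c(u) = c(v)$ corresponds to the coloring $\bar c$ of $(G,\sigma)/e$ that assigns this common value to $w$ and agrees with $c$ elsewhere, and I would check that this correspondence is a bijection. Here is where positivity of $e$ is essential: because the signs of the edges formerly incident to $u$ or $v$ are unchanged under contraction, the condition $c(y) \neq \sigma(f)c(x)$ for such an edge $f$ translates, upon setting $c(u) = c(v) = \bar c(w)$, into exactly the corresponding properness condition in $(G,\sigma)/e$. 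Thus $\bar c$ is proper if and only if $c$ is, and the second class has size $\chi_{(G,\sigma)/e}(\lambda)$. Combining the two classes yields $\chi_{(G,\sigma)\setminus e}(\lambda) = \chi_{(G,\sigma)}(\lambda) + \chi_{(G,\sigma)/e}(\lambda)$, which rearranges to the claimed formula.

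I expect the only delicate point to be the verification in the contraction step that the per-edge properness conditions match after identifying $u$ and $v$, in particular handling any parallel edges or negative loops that the merge may create, and confirming that positivity of $e$ (so that $c(u) = c(v)$ rather than $c(u) = -c(v)$ is the boundary case) is precisely what makes the identification compatible. Once the counting identity is established for every odd positive integer $\lambda = 2k+1$, the corresponding polynomial identity follows since two polynomials agreeing at infinitely many points coincide. Finally, running the same partition-and-bijection argument over the zero-free colorings, for each even positive integer $\lambda = 2k$, proves the balanced identity $\chi^b_{(G,\sigma)}(\lambda) = \chi^b_{(G,\sigma)\setminus e}(\lambda) - \chi^b_{(G,\sigma)/e}(\lambda)$ in the same way, using that a coloring with $c(u) = c(v)$ is zero-free precisely when the common value assigned to $w$ is nonzero.
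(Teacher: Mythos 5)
The paper does not prove this statement at all --- it is quoted from Zaslavsky's \emph{Signed graph coloring} with only a citation --- so there is no in-paper argument to compare against. Your proposal is a correct, self-contained proof and is essentially the standard (and original) argument: for a positive edge $e=uv$ the properness condition is exactly $c(u)\neq c(v)$, so the proper colorings of $(G,\sigma)\setminus e$ split into those that are already proper for $(G,\sigma)$ and those with $c(u)=c(v)$, and the latter biject with proper colorings of $(G,\sigma)/e$ because contraction along a positive edge preserves all remaining edge signs; the delicate cases you flag do work out (a negative edge parallel to $e$ becomes a negative loop at $w$, and the condition $c(v)\neq -c(u)$ with $c(u)=c(v)$ becomes $2\bar c(w)\neq 0$, i.e.\ $\bar c(w)\neq 0$, which is precisely the negative-loop condition), and polynomiality in $\lambda$ upgrades the counting identity at infinitely many arguments to a polynomial identity. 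The zero-free case goes through verbatim as you say.
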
 

\subsection{Hyperplane Arrangements}\label{hyperplane}
Coloring of graphs and signed graphs have a geometrical interpretation via hyperplane arrangements. Recall that a coloring $c$ of a simple graph $G$, where $V(G) = \{1, 2, \cdots ,n\}$, is proper if $c(i) \neq c(j)$, whenever there is an edge $ij$ in $G$. We denote $c(i)$ by $c_i$ and consider $c$ as the point $(c_1, c_2, \cdots, c_n)$ in the real affine space $\mathbb{R}^{n}$, and call it proper if it does not lie in any of the hyperplanes $h_{ij} : x_{i} = x_{j}$ for each $ij \in E(G)$. That is, if we write $\mathcal{H}(G) = \{h_{ij} : ij \in E(G)\}$, which is the hyperplane arrangement of the graph $G$, then counting proper colorings of $G$ using $k$ colors is same as counting integral points in $[k]^{n} \setminus \underset{ij \in E(G)}{{\bigcup}} h_{ij}$ in the space $\mathbb{R}^n$.

The type-$BC$ Coxeter arrangement of dimension $n$, denoted by $BC_{n}$, consists of the following hyperplanes: $h_{ij}^{+} := \{x \in \mathbb{R}^n : x_i = x_j \}$, $h_{ij}^{-} := \{x \in \mathbb{R}^n : x_i = -x_j \}$, and $h_{i} := \{x \in \mathbb{R}^n : x_i = 0 \}$, where $1 \leq i < j \leq n$. For a signed graph $(G,\sigma)$, the signed graphic arrangement $B_{(G,\sigma)}$ is the sub-arrangement of $BC_{n}$ that encodes the properness conditions of signed coloring of $(G,\sigma)$. It is the collection $B_{(G,\sigma)} :     = \lbrace h_{ij}^{\sigma (ij)} : {ij \in E(G)} \rbrace$, where we consider $h_{ii}^{\pm}$ as $h_i$. Thus a coloring $c$ of a signed graph is proper if and only if, as a point in $\mathbb{Z}^{n}$, it avoids each hyperplane of $B_{(G,\sigma)}$.

To each sub collection of an arrangement $\mathcal{A}$, we evaluate its intersection and call this a \textit{flat} of the arrangement. Flats of any arrangement have a partial order by reverse containment. The intersection poset $L\mathcal{(A)}$ of an arrangement $\mathcal{A}$ with respect to partial order of reverse containment is called the \textit{intersection lattice}, whose elements are the flats of the arrangement. Note that $\mathbb{R}^{n}$ itself is a flat of an arrangement $\mathcal{A}$, that corresponds to the sub-collection $\emptyset$ of $\mathcal{A}$. See~\cite{Zaslavsky3} for details.

A \textit{rank} function defined on lattices of arrangements which maps each element to a non-negative integer such that $\text{rank}(\hat{0}) = 0$, and whenever $p$ immediately succeeds $q$ (i.e., there exist no elements between $p$ and $q$), $\text{rank}(p) = \text{rank}(q) +1$. Note that $\hat{0}$ denotes the smallest element of intersection lattice of the arrangement. Since $\hat{0}$ in $L\mathcal{(A)}$ is $\mathbb{R}^{n}$, we observe that a flat of rank $k$ has dimension $n-k$.

Let $\mathcal{H}(G)$ and $\mathcal{H}(K_n)$ be the hyperplane arrangements of a simple graph $G$ and the complete graph $K_n$ on the vertex set $\{1,2,\cdots,n\}$, respectively. Denote the difference $\mathcal{H}(K_n) \setminus \mathcal{H}(G)$ by $\mathcal{C}(G)$. We define the poset $\mathcal{P}(G)$ corresponding to $G$ as the intersection lattice of $\mathcal{C}(G)$. For instance, the poset $\mathcal{P}(P_3)$ of the path $P_3$ is shown as a Hasse diagram in Figure~\ref{lat1}.

For a poset $\mathcal{P}$ equipped with a rank function, the \textbf{Whitney number} $w_{i}^{\mathcal{P}}$ is defined to be the number of elements of $\mathcal{P}$ of rank $i$.
The following theorem determines the chromatic polynomial of a simple graph $G$ in terms of the Whitney numbers $w_i^{\mathcal{P}(G)}$ and the falling factorial $(k)_{i} \coloneqq k(k-1)(k-2) \cdots (k-i+1)$.

\begin{theorem}~\cite{Brian}
\label{Brian1}
For a simple graph $G$ on $n$ vertices, the chromatic polynomial is given by 
\begin{equation*}
\chi_{G}(k) = \sum_{i=0}^{n} w_{i}^{\mathcal{P}(G)} (k)_{n-i}.
\end{equation*}
\end{theorem}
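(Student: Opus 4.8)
The plan is to prove the identity by counting the proper $k$-colorings of $G$ and organizing that count according to the color classes of a coloring, then matching these classes with the flats of $\mathcal{P}(G)$. First I would record the elementary observation that giving a proper coloring $c:V(G)\to\{1,\dots,k\}$ is the same as choosing a partition $\pi$ of $V(G)$ whose blocks are independent sets of $G$ (the color classes of $c$) together with an \emph{injective} assignment of colors to the blocks of $\pi$. For a fixed such $\pi$ with $|\pi|$ blocks there are exactly $(k)_{|\pi|}$ ways to perform the injective assignment, and distinct colorings yield distinct pairs, so
\[
\chi_{G}(k)=\sum_{\pi}(k)_{|\pi|},
\]
where $\pi$ runs over all partitions of $V(G)$ into independent sets. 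This elementary grouping, rather than the deletion--contraction recursion of Theorem~\ref{E-C}, is the combinatorial heart of the argument.

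Next I would translate the index set of this sum into the poset $\mathcal{P}(G)=L(\mathcal{C}(G))$. Since $\mathcal{C}(G)=\mathcal{H}(K_n)\setminus\mathcal{H}(G)$ is the set of hyperplanes $x_i=x_j$ for the non-edges $ij$ of $G$, that is, for the edges of the complement $\bar G$, any intersection of these hyperplanes forces $x_a=x_b$ exactly when $a,b$ lie in one connected piece assembled from non-edges. Thus a flat records a partition of $V(G)$ whose blocks induce connected subgraphs of $\bar G$, and such a flat is a subspace of $\mathbb{R}^{n}$ of dimension equal to its number of blocks. Combining this with the stated rank--dimension relation (a flat of rank $i$ has dimension $n-i$) shows that a rank-$i$ flat corresponds to a partition with exactly $n-i$ blocks, so that $w_i^{\mathcal{P}(G)}$ counts such partitions with $n-i$ blocks. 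The crux of the proof is to reconcile this description with the one coming from colorings: I must show that the partitions counted by the Whitney numbers are precisely the partitions of $V(G)$ into independent sets of $G$ --- equivalently, that the blocks occurring are cliques of $\bar G$ --- so that the two index sets coincide block for block.

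Granting that identification, the theorem follows by re-indexing with $j=n-i$ for the number of blocks:
\[
\chi_{G}(k)=\sum_{\pi}(k)_{|\pi|}=\sum_{i=0}^{n}\bigl(\#\{\pi:\ |\pi|=n-i\}\bigr)\,(k)_{n-i}=\sum_{i=0}^{n}w_{i}^{\mathcal{P}(G)}\,(k)_{n-i},
\]
the top term $i=n$ contributing nothing since there is no partition into $0$ blocks. I expect the decisive obstacle to be exactly the reconciliation flagged above, namely passing between ``connected in $\bar G$'' and ``independent in $G$'' (a clique in $\bar G$); this is where one must rule out spurious flats whose blocks are connected but not complete in $\bar G$, and it is the only place the specific structure of $\mathcal{P}(G)$ really enters. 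Before attempting the general argument I would sanity-check the rank and dimension bookkeeping against the worked poset $\mathcal{P}(P_3)$ of Figure~\ref{lat1}, whose two flats should reproduce $\chi_{P_3}(k)=k(k-1)^2$ via $(k)_3+(k)_2$.
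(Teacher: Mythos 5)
The paper offers no proof of Theorem~\ref{Brian1}; it is quoted from \cite{Brian}, so your proposal must stand on its own. Your opening identity $\chi_{G}(k)=\sum_{\pi}(k)_{|\pi|}$, summed over partitions $\pi$ of $V(G)$ into independent sets, is correct and is the right combinatorial engine, and your rank--dimension bookkeeping is fine. The problem is precisely the step you flag as the crux and then defer (``granting that identification''): matching the rank-$i$ flats of $\mathcal{P}(G)$ with the partitions into $n-i$ independent sets. As you note, a flat of the intersection lattice of $\mathcal{C}(G)=\mathcal{H}(K_n)\setminus\mathcal{H}(G)=\mathcal{H}(\bar{G})$ corresponds to a partition whose blocks are merely \emph{connected} in $\bar{G}$, and a connected subgraph of $\bar{G}$ need not be a clique of $\bar{G}$. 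This gap cannot be closed as posed, because with the paper's literal definition of $\mathcal{P}(G)$ the two index sets genuinely differ. Take $G$ on $\{1,2,3,4\}$ with edges $13,14,24$ (a path), so that $\bar{G}$ is the path with edges $12,23,34$ and $\mathcal{C}(G)=\{h_{12},h_{23},h_{34}\}$. The flat $h_{12}\cap h_{23}=\{x_1=x_2=x_3\}$ is a nonempty intersection, hence a rank-$2$ element of $L(\mathcal{C}(G))$, yet the block $\{1,2,3\}$ contains the edge $13$ of $G$. One finds $w_2^{\mathcal{P}(G)}=3$ while there is only one partition of $V(G)$ into two independent sets, and the displayed formula evaluates at $k=3$ to $(3)_4+3(3)_3+3(3)_2+(3)_1=39$, whereas $\chi_{G}(3)=3\cdot 2^{3}=24$.

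What this reveals is that the definition of $\mathcal{P}(G)$ recorded in Section~\ref{hyperplane} is not the one actually in force: the paper's own example in Figure~\ref{lat2} omits the origin $h_1\cap h_2$ even though it is a nonempty intersection of members of $\mathcal{C}(C_2^-)$, precisely because that flat lies inside the hyperplane $h_{12}^{+}$ of the signed graphic arrangement. The intended poset consists of those flats of the \emph{full} arrangement $\mathcal{H}(K_n)$ (resp.\ $BC_n$) that are contained in no hyperplane of $\mathcal{H}(G)$ (resp.\ $B_{(G,\sigma)}$); in the unsigned case these are exactly the partitions of $V(G)$ all of whose blocks are independent, with rank equal to $n$ minus the number of blocks. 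Under that corrected definition your argument closes immediately --- the two index sets coincide by definition and your re-indexing finishes the proof --- but as written your proposal neither supplies the needed identification nor detects that, for the stated $\mathcal{P}(G)$, it is false. The decisive step is therefore missing, and the repair is to amend the definition of $\mathcal{P}(G)$ rather than to attempt the reconciliation between ``connected in $\bar{G}$'' and ``clique in $\bar{G}$'' that you describe.
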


We give an example to explain the usage of Theorem~\ref{Brian1}.
\begin{ex}\label{ex1}
 Let $P_3$ be the path on three vertices such that $V(P_3) = \{1,2,3\}$ and $E(P_3) = \{12,23\}$. Its hyperplane arrangement is given by $\mathcal{H}(P_3) = \{h_{12}, h_{23}\}$. Therefore, $\mathcal{C}(P_3) = \{h_{13}\}$, and hence $\mathcal{P}(P_3)$ is as shown in Figure~\ref{lat1}. It is clear that, $w_{0}^{\mathcal{P}(P_3)} = 1, w_{1}^{\mathcal{P}(P_3)} = 1, w_{2}^{\mathcal{P}(P_3)} = 0$, and $w_{3}^{\mathcal{P}(P_3)} = 0$. Thus, by Theorem~\ref{Brian1}, the chromatic polynomial of $P_3$ is given by 
\begin{equation*}
\chi_{P_3}(k)  = (k)_3 + (k)_2  = k(k-1)(k-2) + k(k-1) = k(k-1)^{2}.
\end{equation*}
In the similar way, one can show that $\chi_{P_m}(k)  =  k(k-1)^{m-1}$, where $P_m$ is the path on $m$ vertices.
\end{ex}

\begin{figure}[ht]
\centering
\begin{tikzpicture}
\node[vertex] (v1) at (0,0) {};
\node[vertex] (v1) at (0,2.01) {}; 
\draw [thick]
(0,0) -- (0,2);
\node at (0,-0.4) {$\mathbb{R}^3$};
\node at (0,2.3) {$x_1 = x_3$};
\end{tikzpicture}
\caption{The poset $\mathcal{P}(P_3)$} \label{lat1}
\end{figure}
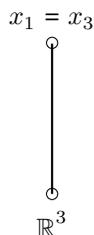

Let $(G,\sigma)$ be a signed graph on the vertex set $\{1,2,\cdots,n\}$ and let $B_{(G,\sigma)}$ be its signed graphic hyperplane arrangement. Denote the difference $BC_{n} \setminus B_{(G,\sigma)}$ by $\mathcal{C}(G,\sigma)$. The poset $\mathcal{P}(G,\sigma)$ corresponding to the signed graph $(G,\sigma)$ is defined to be the intersection lattice of $\mathcal{C}(G,\sigma)$. For instance, the poset $\mathcal{P}(C_{2}^{-})$ of an unbalanced cycle of length two is shown as a Hasse diagram in Figure~\ref{lat2}. 

\begin{figure}[ht]
\centering
\begin{tikzpicture}
\node[vertex] (v1) at (0,0) {};
\node[vertex] (v1) at (1,1) {};
\node[vertex] (v1) at (-1,1) {};
\draw [thick] (0,0) -- (1,1);
\draw [thick] (0,0) -- (-1,1);
\node at (0,-0.4) {$\mathbb{R}^2$};
\node at (1,1.3) {$h_2$};
\node at (-1,1.3) {$h_1$};
\end{tikzpicture}
\caption{The poset $\mathcal{P}(C_{2}^{-})$} \label{lat2}
\end{figure}
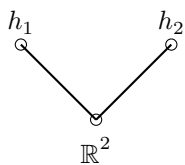

The following theorem determines the chromatic polynomial of a signed graph $(G,\sigma)$ in terms of \textbf{Whitney numbers} of the Poset $\mathcal{P}(G,\sigma)$ and the falling factorial $(k)_{i}$.

\begin{theorem}\cite{Brian}
\label{Brian2}
For a signed graph $(G,\sigma)$ on $n$ vertices, the signed chromatic polynomial is given by 
\begin{equation*}
\chi_{(G,\sigma)}(2k+1) = \sum_{i=0}^{n} w_{i}^{P(G,\sigma)} 2^{n-i} (k)_{n-i}.
\end{equation*}
\end{theorem}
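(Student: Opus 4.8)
The plan is to count proper signed colorings directly as lattice points and to organize the count by the geometry of the $BC_n$ arrangement, in exact parallel to how the unsigned Theorem~\ref{Brian1} is obtained. Fix $\lambda = 2k+1$. A proper signed coloring is precisely a point $c \in \{-k, -k+1, \dots, k\}^n$ lying on no hyperplane of $B_{(G,\sigma)}$. I would assign to each such point $c$ its \emph{type}, namely the smallest flat $F$ of $BC_n$ containing $c$; this is the flat cut out by all relations $c_i = c_j$, $c_i = -c_j$, and $c_i = 0$ that $c$ happens to satisfy. Two observations drive the proof. First, since every hyperplane of $B_{(G,\sigma)}$ lies in $BC_n$, the point $c$ avoids $B_{(G,\sigma)}$ if and only if its type $F$ is contained in no hyperplane of $B_{(G,\sigma)}$; thus $F$ is a flat of $\mathcal{C}(G,\sigma)$ that belongs to $\mathcal{P}(G,\sigma)$. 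Second, every point of the cube has a unique type, so the proper colorings are partitioned according to which flat of $\mathcal{P}(G,\sigma)$ is their type.

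The core computation is to count, for a fixed flat $F$ of $\mathcal{P}(G,\sigma)$ of rank $i$, the proper colorings whose type is exactly $F$. I would first record the standard description of a flat of $BC_n$ as a \emph{signed partition with a zero-block}: a (possibly empty) set $Z$ of coordinates constrained to $0$, together with a partition of the remaining coordinates into blocks, each carrying an internal sign pattern (well defined up to a global flip) encoding the relations $\pm x_a = \pm x_b$. Since $\mathrm{rank}(\hat 0) = 0$ and a flat of rank $i$ has dimension $n-i$, the number of nonzero blocks of $F$ equals $n-i$. A coloring $c$ has type exactly $F$ when the coordinates in $Z$ are set to $0$, the prescribed sign relations hold inside each block, and, crucially, no \emph{further} coincidence occurs: the $n-i$ blocks must receive nonzero values with pairwise distinct absolute values, for otherwise $c$ would lie on a strictly smaller flat. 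Counting these, I choose an ordered selection of distinct absolute values from $\{1, \dots, k\}$ for the $n-i$ blocks, giving $(k)_{n-i}$ possibilities, and then an independent global sign for each block, giving a further factor $2^{n-i}$. Hence exactly $2^{n-i}(k)_{n-i}$ proper colorings have type $F$. (The factor $2^{n-i}$, absent in the unsigned Theorem~\ref{Brian1}, is precisely this per-block sign freedom.)

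Summing over all flats and grouping by rank finishes the argument: since $w_i^{\mathcal{P}(G,\sigma)}$ is the number of flats of $\mathcal{P}(G,\sigma)$ of rank $i$, and distinct types contribute disjoint sets of colorings, I obtain
\[
\chi_{(G,\sigma)}(2k+1) = \sum_{F \in \mathcal{P}(G,\sigma)} 2^{\dim F}(k)_{\dim F} = \sum_{i=0}^{n} w_i^{\mathcal{P}(G,\sigma)}\, 2^{n-i} (k)_{n-i},
\]
as claimed. I would sanity-check the identity on $\mathcal{P}(C_2^-)$ from Figure~\ref{lat2}, where $w_0 = 1$ and $w_1 = 2$ give $4k(k-1) + 4k = 4k^2$, matching a direct count of the proper colorings of the unbalanced $2$-cycle.

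The step I expect to be the main obstacle is the exact-type count and its bookkeeping. One must justify carefully that a flat of $BC_n$ is faithfully encoded by a zero-block together with a signed partition, that its dimension equals the number of nonzero blocks, and that ``type exactly $F$'' is equivalent to the three conditions above --- in particular that requiring pairwise distinct nonzero absolute values is exactly what prevents collapse onto a proper sub-flat, with no double counting from the per-block sign choice (here nonzero values are essential, since $+m \neq -m$). A secondary point needing care is pinning down which flats actually contribute: only those flats of $\mathcal{C}(G,\sigma)$ not contained in any hyperplane of $B_{(G,\sigma)}$ arise as types of proper colorings (for instance, in the $C_2^-$ example the origin does not arise, since it lies on $h_{12}^{+}$), and the treatment must also accommodate negative loops, where the membership $h_i \in B_{(G,\sigma)}$ simply forbids coordinate $i$ from entering the zero-block. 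Once these structural facts are in place, the enumeration and the reindexing by rank are routine.
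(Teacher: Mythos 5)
The paper does not prove this statement; it is quoted from Davis~\cite{Brian} without proof, so there is no in-paper argument to compare against. Your proof is correct and is the standard one for this result: partition the lattice points of the cube $\{-k,\dots,k\}^n$ by the minimal flat of $BC_n$ containing them, note that a point is a proper coloring exactly when its type is contained in no hyperplane of $B_{(G,\sigma)}$, and count $2^{\dim F}(k)_{\dim F}$ points of each admissible type via the zero-block/signed-partition description of flats of $BC_n$; your verification that "pairwise distinct nonzero absolute values across blocks" is precisely the condition for not collapsing onto a smaller flat is the right way to nail down the exact-type count. You are also right to single out the one genuine definitional subtlety: for the formula to hold, $\mathcal{P}(G,\sigma)$ must consist only of those flats of $\mathcal{C}(G,\sigma)$ that are not contained in any hyperplane of $B_{(G,\sigma)}$ --- the paper's literal phrase "the intersection lattice of $\mathcal{C}(G,\sigma)$" would wrongly include, for instance, the origin in the $C_2^-$ example (it lies on $h_{12}^{+}$), whereas Figure~\ref{lat2} correctly omits it and the formula would otherwise give $4k^2+1$ instead of $4k^2$. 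With that reading of the poset, your argument is complete.
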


\begin{ex}\label{ex2}
 Let $C_{2}^{-}$ be an unbalanced cycle of length two as shown in Figure~\ref{C_2}. Its hyperplane arrangement is $B_{C_{2}^{-}} = \{h_{12}^{+}, h_{12}^{-}\}$, and therefore $\mathcal{C}(C_{2}^{-}) = BC_{2} \setminus B_{C_{2}^{-}} = \{h_1, h_2\}$. Observe that $w_{0}^{\mathcal{P}(C_{2}^{-})} = 1$, $w_{1}^{\mathcal{P}(C_{2}^{-})} = 2$ and $w_{2}^{\mathcal{P}(C_{2}^{-})} = 0$. Using Theorem~\ref{Brian2}, the signed chromatic polynomial $\chi_{C_{2}^{-}}(2k+1)$ is obtained as 
\begin{equation*}
\chi_{C_{2}^{-}}(2k+1)  = 2^2(k)_2 + 2 \cdot 2(k)_1  = 4k(k-1) + 4k = (2k)^2.
\end{equation*}
\end{ex}

\begin{figure}[ht]
\centering
\begin{tikzpicture}
%\draw [help lines] (0,0) grid (85,35);
\node[vertex] (v1) at (0,0) {};
\node [below] at (0,-0.2) {1};
\node[vertex] (v2) at (3,0) {};
\node [below] at (3,-0.2) {2};
\draw [dashed] (3,0) to node [sloped,above] { } (0,0);
\draw (3,0) to[out=150,in=30] node [sloped,above] { } (0,0);
\end{tikzpicture} 
\caption{An unbalanced cycle of length two.} \label{C_2}
\end{figure}
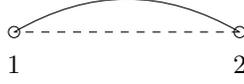

\section{Chromatic Polynomials of Signed Book Graphs}\label{chrom-poly}

Let $B_{l}(m,n)$ and $B^{*}_{l}(m,n)$ denote the signed book graphs with signature $\{uu_{1}^{1}, uu_{1}^{2}, \cdots , uu_{1}^{l}\}$ and \linebreak[4] $\{uv, uu_{1}^{1}, \cdots , uu_{1}^{l-1}\}$, respectively, where $1 \leq l \leq n $. Note that $B_{l}(m,n)$ does not contain the edge $uv$, whereas $B^{*}_{l}(m,n)$ contains the edge $uv$, and that each of their signatures is of size $l$. Proposition~\ref{prop1} along with an application of switching at the vertex $u$, if needed, gives now the following lemma.

\begin{lemm}
\begin{itemize}
\item[(i)] If $n=2k$, then the maximum size of a signature on $B(m,2k)$ is $k$ up to switchings, and $B^{*}_{k}(m,2k)$ is switching equivalent to $B_{k+1}(m,2k)$. 
\item[(ii)] If $n=2k+1$, then the maximum size of a signature on $B(m,2k+1)$ is $k+1$ up to switchings, and $B^{*}_{k+1}(m,2k+1)$ is switching equivalent to $B_{k+1}(m,2k+1)$.
\end{itemize}
\end{lemm}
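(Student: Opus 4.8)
The plan is to prove both statements by the same switching computation, namely by resigning at the single vertex $u$ and tracking how the signature changes. Recall from the definition of switching that resigning at $u$ multiplies the sign of every edge incident to $u$ by $-1$, while leaving all other edges unchanged. In $B(m,n)$ the edges incident to $u$ are precisely $uv$ together with the $n$ edges $uu_1^1, uu_1^2, \dots, uu_1^n$, so $d(u)=n+1$. The key observation is that all the edges involved in the signatures $B_l(m,n)$ and $B^*_l(m,n)$ are incident to $u$, so switching at $u$ acts on these signatures in a completely controlled way: an edge incident to $u$ that was negative becomes positive and vice versa.

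First I would make this precise. Let $S$ denote the set of $u$-incident edges, $S=\{uv,\,uu_1^1,\dots,uu_1^n\}$, so $|S|=n+1$. If $\Sigma\subseteq S$ is a signature all of whose edges are incident to $u$, then resigning at $u$ sends $\Sigma$ to its complement $S\setminus\Sigma$ inside $S$ (the non-$u$ edges are untouched and remain positive). Hence $|\Sigma|+|S\setminus\Sigma|=n+1$. This single identity drives both parts.

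For part (ii), take $n=2k+1$, so $|S|=2k+2$. The signature of $B_{k+1}(m,2k+1)$ is $\Sigma_{k+1}=\{uu_1^1,\dots,uu_1^{k+1}\}$, which has $k+1$ edges, all incident to $u$ and none equal to $uv$. Switching at $u$ replaces it by $S\setminus\Sigma_{k+1}$, which consists of the remaining $k+1$ edges of $S$, namely $\{uv,\,uu_1^{k+2},\dots,uu_1^{2k+1}\}$. This is a size-$(k+1)$ signature containing $uv$ with its other $k$ edges incident to $u$, so it is exactly of the form $\Sigma_{k+1}^{uv}$, i.e.\ a representative of $B^*_{k+1}(m,2k+1)$ (after the obvious automorphism relabelling the pages, which does not affect switching isomorphism). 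Thus $B^*_{k+1}(m,2k+1)$ and $B_{k+1}(m,2k+1)$ are switching equivalent. For part (i), take $n=2k$, so $|S|=2k+1$; the signature $\Sigma_{k+1}=\{uu_1^1,\dots,uu_1^{k+1}\}$ of $B_{k+1}(m,2k)$ has size $k+1$, and switching at $u$ sends it to $S\setminus\Sigma_{k+1}$ of size $(2k+1)-(k+1)=k$, which contains $uv$ and $k-1$ further $u$-incident edges, hence is a representative of $B^*_k(m,2k)$. The maximum-size claims then follow from Proposition~\ref{prop1}: any signature is switching equivalent to one supported on $u$-incident edges, and by switching at $u$ when the support has more than half of $S$ we reduce its size to at most $\lceil n/2\rceil$, which is $k$ when $n=2k$ and $k+1$ when $n=2k+1$.

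I do not expect a serious obstacle here, since everything reduces to the complementation identity for switching at a single vertex. The one point requiring care is the bookkeeping of which page-indices survive the complementation and the observation that a page-permutation automorphism lets us rewrite $S\setminus\Sigma_{k+1}$ in the canonical form used to define $\Sigma_k^{uv}$ or $\Sigma_{k+1}^{uv}$; but this is routine relabelling and is already justified in the discussion preceding Theorem~\ref{theorem1}, where it is shown that any two size-$l$ signatures of the same $uv$-type are automorphic. Hence the main content is simply identifying the complement of the switched signature with the stated target signature.
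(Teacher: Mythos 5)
Your argument is correct and matches the paper's intended justification: the paper gives no explicit proof of this lemma, simply invoking Proposition~4.1 together with a single resigning at $u$, which is exactly the complementation-within-$S$ computation you carry out. Your added observation that the complemented signature agrees with the canonical representative $\{uv, uu_1^1,\dots,uu_1^{l-1}\}$ only after a page-permutation automorphism (so the relation is strictly a switching isomorphism rather than a literal switching equivalence) is a point the paper glosses over, and it is harmless for the subsequent applications since the chromatic polynomial is invariant under both switching and isomorphism.
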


Let us denote an unbalanced cycle on $n$ vertices by $C_n^-$. In Example~\ref{ex2}, we found that the chromatic polynomial of an unbalanced two cycle is $\chi_{C_{2}^{-}}(2k+1) = 4k^2$. In~\cite{Beck}, the authors calculated that \linebreak[4] $\chi_{C_{3}^{-}}(2k+1) = 8k^3$. In the following lemma, we determine the chromatic polynomial of $C_n^-$.

\begin{lemm} \label{C-}
The chromatic polynomial of an unbalanced cycle $C_{n}^{-}$ is $\chi_{C_{n}^{-}}(2k+1) = (2k)^{n}$, where $n \geq 2$. 
\end{lemm}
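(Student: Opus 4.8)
The plan is to prove $\chi_{C_n^-}(2k+1) = (2k)^n$ by induction on $n$, using the signed deletion--contraction formula of Theorem~\ref{E-C} together with the known path chromatic polynomial from Example~\ref{ex1}. The base case $n=2$ is already established in Example~\ref{ex2}, where $\chi_{C_2^-}(2k+1) = (2k)^2$. For the inductive step, the key observation is that an unbalanced cycle $C_n^-$ can always be switched so that exactly one edge is negative and the remaining $n-1$ edges are positive; picking a positive edge $e$ to delete and contract is then the natural move.

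First I would fix a representation of $C_n^-$ with a single negative edge, say with vertices $1,2,\dots,n$ in a cycle where the edge $\{n,1\}$ is negative and all other edges are positive. I would then choose the positive edge $e = \{1,2\}$ and apply Theorem~\ref{E-C}, which gives $\chi_{C_n^-}(2k+1) = \chi_{C_n^- \setminus e}(2k+1) - \chi_{C_n^- / e}(2k+1)$. The deletion $C_n^- \setminus e$ breaks the cycle into a path on $n$ vertices; however, because one edge of this path is negative, it is not literally an ordinary path but a signed path. The first thing to verify is that a signed path is switching equivalent to an all-positive path (any tree is balanced), so that its signed chromatic polynomial equals the ordinary-graph count evaluated at $2k+1$ colors. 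By the formula $\chi_{P_n}(k) = k(k-1)^{n-1}$ from Example~\ref{ex1}, this yields $\chi_{C_n^- \setminus e}(2k+1) = (2k+1)(2k)^{n-1}$.

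Next I would analyze the contraction $C_n^- / e$. Contracting the positive edge $\{1,2\}$ identifies vertices $1$ and $2$ and deletes $e$, leaving a cycle on $n-1$ vertices whose sign product is unchanged; since the original cycle was unbalanced and we contracted a positive edge, the resulting cycle $C_{n-1}^-$ is again unbalanced. Thus by the inductive hypothesis $\chi_{C_n^-/e}(2k+1) = (2k)^{n-1}$. Substituting both pieces into the deletion--contraction relation gives
\begin{equation*}
\chi_{C_n^-}(2k+1) = (2k+1)(2k)^{n-1} - (2k)^{n-1} = (2k)^{n-1}\bigl[(2k+1)-1\bigr] = (2k)^n,
\end{equation*}
which completes the induction.

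The main obstacle I anticipate is the careful bookkeeping of signs under deletion and contraction, specifically confirming two facts: that deleting the positive edge genuinely produces a balanced (hence effectively unsigned) path whose chromatic polynomial is the classical one evaluated at $2k+1$ arguments, and that contracting a positive edge of an unbalanced cycle preserves unbalancedness so that the inductive hypothesis applies to $C_{n-1}^-$ rather than to a balanced cycle. Both follow from the invariance of cycle balance under switching (Theorem~\ref{Signature}) and from the fact that contracting a positive edge does not alter the product of signs around the surrounding cycle, but these should be stated explicitly rather than taken for granted. A secondary subtlety is ensuring the deletion--contraction formula is applied only to a \emph{positive} edge, which is exactly why reducing to the single-negative-edge representative at the outset is essential.
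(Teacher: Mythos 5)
Your proposal is correct and follows essentially the same route as the paper: induction on $n$ with base case $n=2$ from Example~\ref{ex2}, then deletion--contraction (Theorem~\ref{E-C}) on a positive edge, using $\chi_{P_n}(2k+1)=(2k+1)(2k)^{n-1}$ for the deletion and the inductive hypothesis on the unbalanced $(n-1)$-cycle for the contraction. The only difference is that you spell out the sign bookkeeping (the deleted graph is a balanced signed path, the contraction stays unbalanced), which the paper leaves implicit.
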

\begin{proof}
We prove this lemma by induction on $n$. If $n = 2$, the result is true by Example~\ref{ex2}. Let us assume that the result holds for all $n \leq m-1$, where $m \geq 3$. We shall prove that the result is also true for $n = m$. If $e$ is a positive edge of $C_{m}^{-}$ then by edge deletion-contraction formula, we get 
\begin{equation} \label{eq1}
\chi_{C_{m}^{-}}(2k+1) = \chi_{P_{m}}(2k+1) - \chi_{C_{m-1}^{-}}(2k+1).
\end{equation}
We know that $\chi_{P_m}(2k+1) = (2k+1)(2k)^{m-1}$ and by induction hypothesis, we have \linebreak[4] $\chi_{C_{m-1}^{-}}(2k+1) = (2k)^{m-1}$. So from Equation~(\ref{eq1}), we have
\begin{equation*}
 \chi_{C_{m}^{-}}(2k+1) = \chi_{P_m}(2k+1) - \chi_{C_{m-1}^{-}}(2k+1) = (2k+1)(2k)^{m-1} - (2k)^{m-1} = (2k)^{m}.
\end{equation*}
Thus, the proof follows by induction.
\end{proof}

The following lemma is a key to calculate the chromatic polynomial of a signed graph $(G,\sigma)^{l+1}$ in terms of chromatic polynomial of $(G,\sigma)$, where $(G,\sigma)^{l+1}$ is obtained from a given signed graph $(G,\sigma)$ by attaching a path $P_{l+1}=uu_1u_2\ldots u_l$ to a vertex $u$ of $G$.

\begin{lemm} \label{Path_add}
If $(G,\sigma)$ be a signed graph then $\chi_{(G,\sigma)^{l+1}}(2k+1) = (2k)^{l} \chi_{(G,\sigma)}(2k+1)$.
\end{lemm}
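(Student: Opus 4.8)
The plan is to build the graph $(G,\sigma)^{l+1}$ from $(G,\sigma)$ by attaching the path $P_{l+1}=uu_1u_2\cdots u_l$ one edge at a time, and to peel off the path using the signed edge deletion-contraction formula (Theorem~\ref{E-C}) applied to the \emph{pendant} edge at the far end. The key observation is that the newly attached path edges can all be taken to be positive, since any edge of a tree lies on no cycle and its sign is switchable; thus deletion-contraction in its clean form for positive edges applies directly at each stage.

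First I would treat the base case $l=0$, where $(G,\sigma)^{1}=(G,\sigma)$ and the claimed identity reads $\chi_{(G,\sigma)}(2k+1)=(2k)^{0}\chi_{(G,\sigma)}(2k+1)$, which is trivially true. For the inductive step I would focus on the single pendant edge $e=u_{l-1}u_l$ (with the convention $u_0=u$) at the tip of the path in $(G,\sigma)^{l+1}$. Applying Theorem~\ref{E-C} to this positive edge gives
\begin{equation*}
\chi_{(G,\sigma)^{l+1}}(2k+1)=\chi_{(G,\sigma)^{l+1}\setminus e}(2k+1)-\chi_{(G,\sigma)^{l+1}/e}(2k+1).
\end{equation*}
The deletion $(G,\sigma)^{l+1}\setminus e$ is exactly $(G,\sigma)^{l}$ together with an isolated vertex $u_l$; since an isolated vertex may receive any of the $2k+1$ colors freely, this contributes a factor $(2k+1)\,\chi_{(G,\sigma)^{l}}(2k+1)$. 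The contraction $(G,\sigma)^{l+1}/e$ identifies $u_l$ with $u_{l-1}$, yielding precisely $(G,\sigma)^{l}$, so it contributes $\chi_{(G,\sigma)^{l}}(2k+1)$. Subtracting gives
\begin{equation*}
\chi_{(G,\sigma)^{l+1}}(2k+1)=(2k+1)\chi_{(G,\sigma)^{l}}(2k+1)-\chi_{(G,\sigma)^{l}}(2k+1)=(2k)\,\chi_{(G,\sigma)^{l}}(2k+1),
\end{equation*}
and the induction hypothesis $\chi_{(G,\sigma)^{l}}(2k+1)=(2k)^{l-1}\chi_{(G,\sigma)}(2k+1)$ then yields the desired formula.

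The main obstacle I anticipate is the careful bookkeeping of the degenerate extremes of the path and the justification that the deletion and contraction really produce the stated graphs. Specifically, I must confirm that contracting the tip edge $e$ merges only the single leaf vertex back into the previous path vertex (leaving $(G,\sigma)^{l}$ intact with no spurious multi-edges or loops, since $u_l$ has degree one), and that deleting $e$ genuinely isolates $u_l$ so that the isolated-vertex factor is exactly $2k+1$ and not something dependent on $\sigma$. I would also make explicit the reduction that all path edges may be assumed positive (a switching at each of $u_1,\ldots,u_l$ in turn makes the path edges positive without altering $\chi$, which is switching-invariant), so that Theorem~\ref{E-C} in its positive-edge form is legitimately applicable. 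Once these structural identifications are nailed down, the rest is a routine one-variable induction identical in spirit to the proof of Lemma~\ref{C-}.
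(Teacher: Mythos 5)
Your proof is correct, and it uses the same basic machinery as the paper (induction on $l$ plus the signed deletion--contraction formula of Theorem~\ref{E-C}), but with a genuinely different choice of edge. The paper deletes and contracts the \emph{root} edge $uu_1$ where the path meets $G$: its deletion term is then the disjoint union $P_m \sqcup (G,\sigma)$, so the argument has to invoke the formula $\chi_{P_m}(2k+1)=(2k+1)(2k)^{m-1}$ together with multiplicativity of the chromatic polynomial over components, while the contraction term is $(G,\sigma)^{m}$, handled by the induction hypothesis. You instead delete and contract the \emph{tip} edge $u_{l-1}u_l$, so both resulting graphs are $(G,\sigma)^{l}$ (one with an extra isolated vertex), and the only auxiliary fact needed is that an isolated vertex contributes a factor of $2k+1$. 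Your version is slightly more self-contained for that reason, and your explicit remark that the pendant path edges may be assumed positive by switching (so that Theorem~\ref{E-C} applies) addresses a point the paper leaves implicit. Both arguments are sound; the paper's root-edge recursion has the mild advantage of mirroring the structure it reuses later in Theorems~\ref{B_1} and~\ref{B_{1}*}, where the deleted graph really is a disjoint-looking attachment handled via Lemma~\ref{Path_add} itself.
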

\begin{proof}
We prove this lemma by induction on $l$. It is known that $\chi_{P_{l+1}}(2k+1) = (2k+1)(2k)^{l}$. Let $l = 1$, and $e_1 = uu_{1}$ be attached to a vertex $u$ of $G$. Using edge deletion-contraction formula on $e_1$, we get 
\begin{equation*}
\chi_{(G,\sigma)^{2}}(2k+1) = (2k+1)  \chi_{(G,\sigma)}(2k+1) - \chi_{(G,\sigma)}(2k+1) = (2k)  \chi_{(G,\sigma)}(2k+1).
\end{equation*}
Let us assume that the result holds for $l = m-1$, that is, $\chi_{(G,\sigma)^{m}}(2k+1) = (2k)^{m-1}  \chi_{(G,\sigma)}(2k+1)$, where $m \geq 2$. Now let $l = m$ and $e = uu_{1}$. Using edge deletion-contraction formula on the edge $e$ of $(G,\sigma)^{m+1}$, we get $\chi_{(G,\sigma)^{m+1}}(2k+1) = \chi_{P_{m} \cup (G,\sigma)}(2k+1) - \chi_{(G,\sigma)^{m}}(2k+1)$, where $P_m$ and $(G,\sigma)$ are disjoint. Thus we have
\begin{align*}
\chi_{(G,\sigma)^{m+1}}(2k+1) & = (2k+1)(2k)^{m-1}  \chi_{(G,\sigma)}(2k+1) - \chi_{(G,\sigma)^{m}}(2k+1) \\
 & = (2k+1)(2k)^{m-1}  \chi_{(G,\sigma)}(2k+1) - (2k)^{m-1}  \chi_{(G,\sigma)}(2k+1) \\
 & = (2k)^{m}  \chi_{(G,\sigma)}(2k+1).
\end{align*}
Hence the proof follows by induction. 
\end{proof}
 
For each $m \geq 3$ and $n \geq 1$, consider the Book graph $B(m,n)$ and replace the edge $uv$ by an unbalanced cycle of length two. The signed graph so obtained is denoted by $B_{m}^{n}$. For example, the graphs $B_{4}^{3}$ and $B_{m}^{1}$ are shown in Figure~\ref{Figure5}.

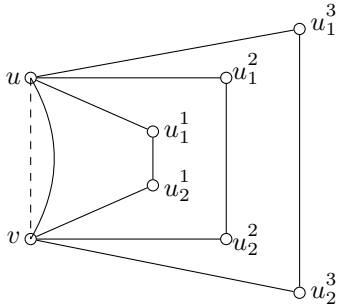
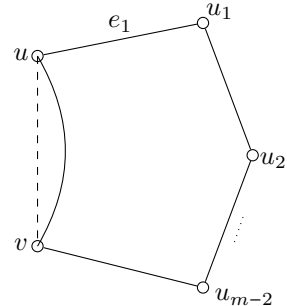
\begin{figure}[ht]
\begin{subfigure}{0.33\textwidth}
\begin{tikzpicture}[scale=0.65]
%\draw [help lines] (0,0) grid (85,35);
\node[vertex] (v1) at (9,2) {};
\node [below] at (8.65,2.35) {$v$};
\node[vertex] (v2) at (9,5.3) {};
\node [below] at (8.65,5.6) {$u$};
\node[vertex] (v3) at (11.5,3.1) {};
\node [below] at (12,3.6) {$u_{2}^{1}$};
\node[vertex] (v4) at (11.5,4.2) {};
\node [below] at (12,4.8) {$u_{1}^{1}$};
\node[vertex] (v5) at (13,2) {};
\node [below] at (13.4,2.5) {$u_{2}^{2}$};
\node[vertex] (v6) at (13,5.3) {};
\node [below] at (13.4,6) {$u_{1}^{2}$};
\node[vertex] (v7) at (14.5,0.9) {};
\node [below] at (15,1.5) {$u_{2}^{3}$};
\node[vertex] (v8) at (14.5,6.3) {};
\node [below] at (15,7) {$u_{1}^{3}$};

\foreach \from/\to in {v1/v3,v2/v4,v1/v5,v2/v6,v1/v7,v2/v8,v3/v4,v5/v6,v7/v8} \draw (\from) -- (\to);

\draw [dashed] (9,5.3) to node [sloped,above] { } (9,2);
\draw (9,5.3) to[out=-60,in=60] node [sloped,above] { } (9,2);

\end{tikzpicture}
\caption{The graph $B_{4}^{3}$.}\label{unbal_2}
\end{subfigure}
\hfill
\begin{subfigure}{0.33\textwidth}
\begin{tikzpicture}[scale=1.1]
%\draw [help lines] (0,0) grid (85,35);
\node[vertex] (v1) at (9,2) {};
\node [below] at (8.8,2.2) {$v$};
\node[vertex] (v2) at (9,4.3) {};
\node [below] at (8.8,4.5) {$u$};
\node[vertex] (v7) at (11,1.5) {};
\node [below] at (11.5,1.6) {$u_{m-2}$};
\node[vertex] (v8) at (11,4.7) {};
\node [below] at (11.2,5.05) {$u_{1}$};
\node[vertex] (v9) at (11.6,3.1) {};
\node [below] at (11.85,3.25) {$u_{2}$};
\node [below] at (10,4.9) {$e_1$};
\draw[dotted](11.38,2.05) -- (11.5,2.40);

\foreach \from/\to in {v1/v7,v2/v8,v7/v9,v8/v9} \draw (\from) -- (\to);
\draw [dashed] (9,4.3) to node [sloped,above] { } (9,2);
\draw (9,4.3) to[out=-60,in=60] node [sloped,above] { } (9,2);

\end{tikzpicture}
\caption{The graph $B_{m}^{1}$.} \label{figure6.1}
\end{subfigure}
\caption{The graphs $B_{4}^{3}$ and $B_{m}^{1}$.}\label{Figure5}
\end{figure}

As a convention, let us take $B_{2}^{1} := C_{2}^{-}$. So $\chi_{B_{2}^{1}}(2k+1) = \chi_{C_{2}^{-}}(2k+1) = (2k)^{2}$. Further, as an application of Lemma~\ref{Path_add} we have the following lemma.

\begin{lemm} \label{B_m}
For $m \geq 2$, the signed chromatic polynomial of $B_{m}^{1}$ is given by 
$$\chi_{B_{m}^{1}}(2k+1) = \Big[ \sum_{i=0}^{m-2} (-1)^{i} (2k)^{(m-2)-i} \Big]  \chi_{C_{2}^{-}}(2k+1) = \sum_{i=0}^{m-2} (-1)^{i} (2k)^{m-i}.$$
\end{lemm}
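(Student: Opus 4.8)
The plan is to understand the structure of $B_m^1$ and then peel off the attached path using Lemma~\ref{Path_add}. The key observation is that $B_m^1$ consists of the unbalanced $2$-cycle $C_2^-$ on the vertices $u$ and $v$ (obtained by replacing the edge $uv$ with the unbalanced digon), together with a single path $u u_1 u_2 \cdots u_{m-2} v$ that connects $u$ to $v$ through the $m-2$ interior vertices of the lone page. So $B_m^1$ is precisely $C_2^-$ with an extra $u$--$v$ path of length $m-1$ glued on. I would like to reduce the computation to $\chi_{C_2^-}(2k+1)$, for which Lemma~\ref{C-} (or Example~\ref{ex2}) already gives $(2k)^2$.

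First I would pick the positive edge $e_1 = u u_1$ incident to $u$ and apply the edge deletion–contraction formula of Theorem~\ref{E-C}. Deleting $e_1$ leaves $C_2^-$ with a pendant path $u_1 u_2 \cdots u_{m-2} v$ hanging off $v$; since this deleted graph is $C_2^-$ with a path of length $m-2$ attached at $v$, Lemma~\ref{Path_add} evaluates its chromatic polynomial as $(2k)^{m-2}\chi_{C_2^-}(2k+1)$. Contracting $e_1$ identifies $u$ with $u_1$ and yields exactly $B_{m-1}^1$ (the page shrinks by one vertex, while the digon on $u,v$ is untouched). This gives the recurrence
\begin{equation*}
\chi_{B_m^1}(2k+1) = (2k)^{m-2}\chi_{C_2^-}(2k+1) - \chi_{B_{m-1}^1}(2k+1),
\end{equation*}
valid for $m \geq 3$, with base case $\chi_{B_2^1}(2k+1) = \chi_{C_2^-}(2k+1)$ by the stated convention.

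Next I would solve this first-order linear recurrence by induction on $m$. Assuming $\chi_{B_{m-1}^1}(2k+1) = \sum_{i=0}^{m-3}(-1)^i (2k)^{(m-3)-i}\,\chi_{C_2^-}(2k+1)$, substituting into the recurrence and factoring out $\chi_{C_2^-}(2k+1)$ produces the telescoping alternating sum $\sum_{i=0}^{m-2}(-1)^i (2k)^{(m-2)-i}$, which is the claimed bracketed factor. The second equality in the statement then follows immediately by substituting $\chi_{C_2^-}(2k+1) = (2k)^2$ and shifting the exponent, since $(2k)^{(m-2)-i}\cdot(2k)^2 = (2k)^{m-i}$.

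The only step requiring genuine care is confirming the two structural identifications in the deletion–contraction step — that deleting $e_1$ really yields $C_2^-$ with a length-$(m-2)$ path attached (so Lemma~\ref{Path_add} applies with exponent $m-2$), and that contracting $e_1$ genuinely returns $B_{m-1}^1$ rather than some degenerate graph. Because $e_1$ is a path edge far from the digon, neither operation disturbs the unbalanced $2$-cycle, so both identifications hold cleanly; after that the argument is a routine induction and the bracketed geometric-type sum assembles automatically.
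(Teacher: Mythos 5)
Your proposal is correct and follows essentially the same route as the paper: a single deletion--contraction on the edge $uu_1$, with Lemma~\ref{Path_add} handling the deleted graph ($C_2^-$ with a pendant path, contributing $(2k)^{m-2}\chi_{C_2^-}(2k+1)$) and the induction hypothesis handling the contracted graph $B_{m-1}^1$. The resulting recurrence and the telescoping alternating sum are exactly what the paper's induction produces.
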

\begin{proof}
We prove this lemma by induction on $m$. Consider $B_{m}^{1}$ as given in Figure~\ref{Figure5}(b). For $m=2$, the result is true by Example~\ref{ex2}.

\begin{figure}[ht]
\begin{subfigure}{0.24\textwidth}
\begin{tikzpicture}[scale=0.7]
%\draw [help lines] (0,0) grid (85,35);
\node[vertex] (v1) at (9,2) {};
\node [below] at (8.7,2.3) {$v$};
\node[vertex] (v2) at (9,4.3) {};
\node [below] at (8.7,4.6) {$u$};
\node[vertex] (v7) at (11,1.5) {};
\node [below] at (11.65,1.75) {$u_{r-2}$};
\node[vertex] (v8) at (11,4.7) {};
\node [below] at (11.4,5.2) {$u_{1}$};
\node[vertex] (v9) at (11.6,3.1) {};
\node [below] at (12,3.35) {$u_{2}$};
\node [below] at (14.5,3.5) {$\longrightarrow$};
\node [below] at (10,5.1) {$e_1$};
\draw[dotted](11.5,2.0) -- (11.62,2.35);

\foreach \from/\to in {v1/v7,v2/v8,v7/v9,v8/v9} \draw (\from) -- (\to);

\draw [dashed] (9,4.3) to node [sloped,above] { } (9,2);
\draw (9,4.3) to[out=-60,in=60] node [sloped,above] { } (9,2);

\end{tikzpicture}
\end{subfigure}
\hfill
\begin{subfigure}{0.24\textwidth}
\begin{tikzpicture}[scale=0.8]
%\draw [help lines] (0,0) grid (85,35);
\node[vertex] (v1) at (9,2) {};
\node [below] at (8.7,2.2) {$v$};
\node[vertex] (v2) at (9,4.3) {};
\node [below] at (8.7,4.5) {$u$};
\node[vertex] (v7) at (10.3,2) {};
\node [below] at (10.3,1.95) {$u_{r-2}$};
\node[vertex] (v8) at (11.6,2) {};
\node [below] at (11.8,1.95) {$u_{2}$};
\node[vertex] (v9) at (12.9,2) {};
\node [below] at (12.9,1.95) {$u_{1}$};
\node [below] at (14,3) {$-$};
\node [below] at (11.15,1.85) {$\cdots$};

\foreach \from/\to in {v1/v7,v7/v9,v8/v9} \draw (\from) -- (\to);

\draw [dashed] (9,4.3) to node [sloped,above] { } (9,2);
\draw (9,4.3) to[out=-60,in=60] node [sloped,above] { } (9,2);

\end{tikzpicture}
\end{subfigure}
\hfill
\begin{subfigure}{0.24\textwidth}
\begin{tikzpicture}[scale=0.8]
%\draw [help lines] (0,0) grid (85,35);
\node[vertex] (v1) at (9,2) {};
\node [below] at (8.7,2.2) {$v$};
\node[vertex] (v2) at (9,4.3) {};
\node [below] at (8.7,4.5) {$u$};
\node[vertex] (v7) at (11,1.5) {};
\node [below] at (11.6,1.65) {$u_{r-2}$};
\node[vertex] (v8) at (11,4.7) {};
\node [below] at (11.4,5.1) {$u_{2}$};
\node[vertex] (v9) at (11.6,3.1) {};
\node [below] at (11.95,3.35) {$u_{3}$};
\node [below] at (10,5.1) {$e_2$};
\draw[dotted](11.4,2.0) -- (11.62,2.55);

\foreach \from/\to in {v1/v7,v2/v8,v7/v9,v8/v9} \draw (\from) -- (\to);

\draw [dashed] (9,4.3) to node [sloped,above] { } (9,2);
\draw (9,4.3) to[out=-60,in=60] node [sloped,above] { } (9,2);

\end{tikzpicture}
\end{subfigure}
\caption{An application of edge deletion-contraction on $B_{r}^{1}$.} \label{Un_1}
\end{figure}
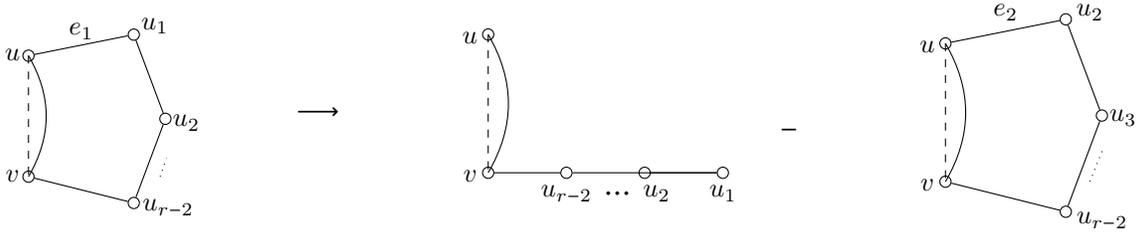

\noindent
Now let us assume that the result is true for $m = r-1$, where $r \geq 3$. That is, 
\begin{equation} \label{eq2}
\chi_{B_{r-1}^{1}}(2k+1) = \sum_{i=0}^{r-3} (-1)^{i} (2k)^{r-1-i}.
\end{equation}
An application of edge deletion-contraction on $e_1 = uu_{1}$ of $B_{r}^{1}$ is shown in Figure~\ref{Un_1}. Using Lemma~\ref{Path_add}, the chromatic polynomial of the graph in the middle of Figure~\ref{Un_1} can be computed easily, since $\chi_{C_{2}^{-}}(2k+1)$ is known. By induction hypothesis, the chromatic polynomial of the third graph of Figure~\ref{Un_1} is given in Equation~(\ref{eq2}). Therefore we find that
\begin{align*}
\chi_{B_{r}^{1}}(2k+1) & = (2k)^{r-2}  \chi_{C_{2}^{-}}(2k+1) - \sum_{i=0}^{r-3} (-1)^{i} (2k)^{r-1-i} \\
 & = (-1)^{0}(2k)^{r-2}  (2k)^{2} + \sum_{j=1}^{r-2} (-1)^{j} (2k)^{r-j} \\
 & =  \sum_{j=0}^{r-2} (-1)^{j} (2k)^{r-j} .
\end{align*}
Hence the lemma is proved by induction.
\end{proof}

The chromatic polynomial $\chi_{B_{m}^{1}}(2k+1)$, repeated use of edge deletion-contraction formula and Lemma~\ref{Path_add} give us the following proposition.

\begin{prop}\label{prop_1}
For $n \geq 2$, the chromatic polynomial of $B_{m}^{n}$ is given by 
\begin{equation*}
 \chi_{B_{m}^{n}}(2k+1) = \Big[ \sum_{i=0}^{m-2} (-1)^{i} (2k)^{(m-2)-i} \Big] \chi_{B_{m}^{n-1}}(2k+1).
\end{equation*} 
\end{prop}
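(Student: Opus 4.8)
The plan is to view $B_m^n$ as $B_m^{n-1}$ with one additional page attached and then to strip this page off by repeated edge deletion-contraction, exactly mirroring the proof of Lemma~\ref{B_m} but with the base graph $C_2^-$ replaced throughout by $B_m^{n-1}$. Concretely, write the extra page as the all-positive path $u z_1 z_2 \cdots z_{m-2} v$ through $m-2$ fresh internal vertices; these vertices meet the rest of $B_m^n$ only at the hubs $u$ and $v$, and $B_m^{n-1}$ still contains the unbalanced $2$-cycle on $\{u,v\}$.

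First I would apply the signed deletion-contraction formula (Theorem~\ref{E-C}) to the positive edge $e_1 = uz_1$. Deletion leaves $B_m^{n-1}$ with the chain $z_1 z_2 \cdots z_{m-2}$ dangling from $v$, i.e., a pendant path on $m-2$ new vertices, so Lemma~\ref{Path_add} evaluates it as $(2k)^{m-2}\chi_{B_m^{n-1}}(2k+1)$. Contraction of $e_1$ (permissible since $e_1$ is positive, and harmless since $z_1$ is adjacent only to $u$ and $z_2$, so no loop is created) merges $z_1$ into $u$ and shortens the extra page to $uz_2\cdots z_{m-2}v$. Iterating, and writing $\Phi_j$ for the chromatic polynomial of $B_m^{n-1}$ carrying one extra positive path of length $j$ between $u$ and $v$, each step yields $\Phi_j = (2k)^{j-1}\chi_{B_m^{n-1}}(2k+1) - \Phi_{j-1}$ for $j \ge 2$, with $\Phi_{m-1} = \chi_{B_m^n}(2k+1)$.

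The terminal case is where the hypothesis on $B_m^n$ really enters. When the extra page is contracted all the way down to a single edge $uv$ (the case $j=1$), that edge is positive and parallel to the positive edge of the unbalanced $2$-cycle already present between $u$ and $v$; a parallel positive edge imposes no new properness condition, so $\Phi_1 = \chi_{B_m^{n-1}}(2k+1)$. Without this pre-existing positive $uv$ edge the factorization would not collapse so cleanly.

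It then remains to solve the recurrence. Setting $\Phi_j = c_j\,\chi_{B_m^{n-1}}(2k+1)$ gives $c_1 = 1$ and $c_j = (2k)^{j-1} - c_{j-1}$, whose solution is the alternating sum $c_j = \sum_{i=0}^{j-1}(-1)^i(2k)^{j-1-i} = \frac{(2k)^j - (-1)^j}{2k+1}$, verified by a one-line induction. Evaluating at $j = m-1$ gives precisely $\chi_{B_m^n}(2k+1) = \big[\sum_{i=0}^{m-2}(-1)^i(2k)^{(m-2)-i}\big]\chi_{B_m^{n-1}}(2k+1)$. I expect the only real difficulty to be the bookkeeping in the two reductions — checking that deletion always exposes a pendant path fit for Lemma~\ref{Path_add}, that contraction leaves the $2$-cycle and the other $n-1$ pages untouched, and that the final contracted edge is genuinely redundant; once these are secured, the recurrence and its resolution are routine.
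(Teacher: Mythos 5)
Your proposal is correct and follows exactly the route the paper intends: the paper states this proposition as a consequence of ``repeated use of edge deletion-contraction formula and Lemma~\ref{Path_add}'' without writing out the details, and your argument --- peeling off the extra page edge by edge, evaluating each deletion branch via Lemma~\ref{Path_add}, and closing the recursion with the redundant positive edge parallel to the unbalanced $2$-cycle --- is a correct and complete fleshing-out of that sketch, mirroring the proof of Lemma~\ref{B_m}. The recurrence $c_j=(2k)^{j-1}-c_{j-1}$ with $c_1=1$ and its alternating-sum solution are verified correctly.
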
 
It is well known that, for given ordinary graphs $G$ and $H$, where $G \cap H$ is a complete graph, we have 
\begin{equation} \label{eq3}
\chi_{G \cup H}(k) = \frac{\chi_{G}(k) \chi_{H}(k)}{\chi_{G \cap H}(k)}.
\end{equation}

Using Equation~(\ref{eq3}), we now determine the chromatic polynomial of an unsigned Book graph.

\begin{theorem} \label{B_0}
The chromatic polynomial of $B(m,n)$, where $m \geq 3$ and $n \geq 2$, is given by 
\begin{equation*}
\chi_{B(m,n)}(k) = \frac{ [(k-1)^{m} + (-1)^{m}(k-1)]^{n}}{ [k(k-1)]^{n-1} }.
\end{equation*}
\end{theorem}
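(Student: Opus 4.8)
The plan is to apply the gluing formula in Equation~(\ref{eq3}) iteratively, peeling off one page of the book at a time. Recall that $B(m,n)$ is the union of $n$ copies of the cycle $C_m$, all sharing the single common edge $uv$. Each page is a copy of $C_m$, and the intersection of any page with the rest of the graph is exactly the edge $uv$, which is a copy of $K_2$. So I would set $G = B(m,n-1)$ and $H = C_m$ (the $n$-th page), with $G \cap H = K_2$, the common edge $uv$. This lets me write
\begin{equation*}
\chi_{B(m,n)}(k) = \frac{\chi_{B(m,n-1)}(k)\,\chi_{C_m}(k)}{\chi_{K_2}(k)}.
\end{equation*}
Here I would use the two standard facts $\chi_{C_m}(k) = (k-1)^m + (-1)^m (k-1)$ and $\chi_{K_2}(k) = k(k-1)$; the former is the well-known chromatic polynomial of a cycle, which can be derived by the same deletion-contraction argument used in Example~\ref{ex1} for paths.

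First I would verify the base case $n=1$, where $B(m,1) = C_m$, so that $\chi_{B(m,1)}(k) = (k-1)^m + (-1)^m(k-1)$; one checks that this agrees with the claimed closed form when $n=1$ (the denominator $[k(k-1)]^{0}$ is $1$). Then I would proceed by induction on $n$. Assuming the formula holds for $B(m,n-1)$, I substitute the inductive hypothesis together with the values of $\chi_{C_m}(k)$ and $\chi_{K_2}(k)$ into the recurrence above:
\begin{equation*}
\chi_{B(m,n)}(k) = \frac{1}{k(k-1)} \cdot \frac{[(k-1)^m + (-1)^m(k-1)]^{n-1}}{[k(k-1)]^{n-2}} \cdot \big[(k-1)^m + (-1)^m(k-1)\big].
\end{equation*}
Collecting the numerator factors gives $[(k-1)^m + (-1)^m(k-1)]^{n}$, and collecting the denominator factors gives $[k(k-1)]^{n-1}$, which is exactly the claimed expression. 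This reduces the whole theorem to routine bookkeeping of exponents.

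The only genuine point that needs care is the justification for applying Equation~(\ref{eq3}) at each step, namely that $G \cap H$ really is a complete graph. Since every page of the book meets the union of the remaining pages in precisely the single edge $uv$ (the vertices $u_k^l$ of distinct pages are never adjacent and never shared), the intersection is $K_2$, which is complete, so the gluing formula applies legitimately. I expect this verification of the intersection structure to be the main conceptual obstacle, but it follows immediately from the definition of $B(m,n)$ given in Section~\ref{notations}; the rest is algebraic simplification. An alternative to induction would be to apply Equation~(\ref{eq3}) to all $n$ pages at once, writing $\chi_{B(m,n)}(k) = \chi_{C_m}(k)^n / \chi_{K_2}(k)^{n-1}$ directly, which yields the same closed form in one stroke, though one must still argue that the repeated gluing is valid because each successive intersection remains the edge $uv$.
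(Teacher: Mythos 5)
Your proposal is correct and follows essentially the same route as the paper: induction on $n$, decomposing $B(m,n)$ as $B(m,n-1)\cup C_m$ with intersection $K_2$, and applying the gluing formula of Equation~(\ref{eq3}) together with $\chi_{C_m}(k)=(k-1)^m+(-1)^m(k-1)$ and $\chi_{K_2}(k)=k(k-1)$. The only cosmetic difference is that the paper also records the $n=2$ case explicitly before the inductive step, while you fold it into the general recursion.
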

\begin{proof}
We prove this theorem by induction on $n$. It is clear that $B(m,1) = C_m$, a cycle on $m$ vertices, and it is well known that
\begin{equation} \label{eq4}
\chi_{C_m}(k) = (k-1)^{m} + (-1)^{m}(k-1).   
\end{equation}
Further, it is clear that the graph $B(m,2)$ is the union of two $m$-cycles whose intersection is $K_2$. Thus by Equation~(\ref{eq3}), we get 
\begin{equation*}
\chi_{B(m,2)}(k) = \frac{ [(k-1)^{m} + (-1)^{m}(k-1)]^{2}}{k(k-1)}.
\end{equation*} 
This shows that the result is true for $n=1$ and $n = 2$. Let us assume that the result is true for  $n = r-1$, where $r \geq 3$. That is, 
\begin{equation} \label{eq5}
\chi_{B(m,r-1)}(k) = \frac{ [(k-1)^{m} + (-1)^{m}(k-1)]^{r-1}}{ [k(k-1)]^{r-2} }.
\end{equation}
Now we prove that the result is true for $n = r$. Note that the graph $B(m,r)$ is the union of the graphs $B(m,r-1)$ and $C_m$, whose intersection is $K_2$. Therefore by Equation~(\ref{eq3}), we have 
\begin{equation} \label{eq6}
\chi_{B(m,r)}(k) = \frac{\chi_{B(m,r-1)}(k)  \chi_{C_m}(k)}{k(k-1)}.
\end{equation}
Using Equations~(\ref{eq4}) and ~(\ref{eq5}) in Equation~(\ref{eq6}), we get 
\begin{equation*}
\chi_{B(m,r)}(k) = \frac{ [(k-1)^{m} + (-1)^{m}(k-1)]^{r}}{ [k(k-1)]^{r-1} }.
\end{equation*}
Hence the proof follows by induction.
\end{proof}

Recall that $B(m,n)$ is a signed graph with empty signature, and so the ordinary chromatic polynomial and the signed chromatic polynomial of this graph are same. Note that $B_{1}(m,1) \cong C_{m}^{-} \cong B_{1}^{*}(m,1)$, and the chromatic polynomial $\chi_{C_{m}^{-}}(2k+1)$ is already obtained in the Lemma~\ref{C-}. Now for $n \geq 2$, we recursively determine the chromatic polynomials of the signed Book graphs $B_{1}(m,n)$ and $B_{1}^{*}(m,1)$ in the following two theorems.

\begin{theorem} \label{B_1}
For $n \geq 2$, the chromatic polynomial of $B_{1}(m,n)$ is given by
\begin{equation*}
 \chi_{B_{1}(m,n)}(2k+1) = \Big[ \sum_{i=0}^{m-2} (-1)^{i} (2k)^{(m-2)-i} \Big] \chi_{B_{1}(m,n-1)}(2k+1).
\end{equation*} 
\end{theorem}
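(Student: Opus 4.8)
The plan is to realize $B_{1}(m,n)$ as $B_{1}(m,n-1)$ with one extra \emph{balanced} page glued along the spine edge $uv$, and then to strip this new page off by repeated edge deletion-contraction exactly as in the proof of Lemma~\ref{B_m}. Write $G := B_{1}(m,n-1)$, and note that $G$ contains the positive spine edge $uv$ while its single negative edge $uu_{1}^{1}$ lives on the untouched page~$1$. The graph $B_{1}(m,n)$ is obtained from $G$ by adjoining a new positive path $u w_{1} w_{2} \cdots w_{m-2} v$ of length $m-1$ on $m-2$ fresh vertices $w_{1},\dots,w_{m-2}$; together with $uv$ this path is the balanced $m$-th page.

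First I would introduce, for $2 \le r \le m$, the intermediate signed graph $G_{r}$ obtained from $G$ by adjoining a positive path $u w_{1} \cdots w_{r-2} v$ of length $r-1$. Then $G_{m} = B_{1}(m,n)$, while $G_{2}$ is just $G$ together with a second, parallel positive edge $uv$; since a positive edge only imposes $c(u)\neq c(v)$, this duplicate edge adds no new constraint, so $\chi_{G_{2}}(2k+1) = \chi_{B_{1}(m,n-1)}(2k+1)$.

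Next I would apply the signed edge deletion-contraction formula (Theorem~\ref{E-C}) to the positive edge $e = u w_{1}$ of $G_{r}$. Deleting $e$ leaves the path $w_{1} w_{2} \cdots w_{r-2} v$ hanging from $G$ at the single vertex $v$, that is, a pendant path on $r-2$ new vertices; by Lemma~\ref{Path_add} its chromatic polynomial is $(2k)^{r-2}\chi_{B_{1}(m,n-1)}(2k+1)$. Contracting $e$ identifies $u$ with $w_{1}$ and turns the connecting path into $u w_{2} \cdots w_{r-2} v$ of length $r-2$, which is precisely $G_{r-1}$. Hence
\begin{equation*}
\chi_{G_{r}}(2k+1) = (2k)^{r-2}\chi_{B_{1}(m,n-1)}(2k+1) - \chi_{G_{r-1}}(2k+1).
\end{equation*}

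Finally I would solve this recursion by induction on $r$, starting from $\chi_{G_{2}}(2k+1) = \chi_{B_{1}(m,n-1)}(2k+1)$; it telescopes into $\chi_{G_{r}}(2k+1) = \big[\sum_{i=0}^{r-2}(-1)^{i}(2k)^{(r-2)-i}\big]\chi_{B_{1}(m,n-1)}(2k+1)$, and setting $r = m$ gives the claim. The only genuinely delicate points are bookkeeping: checking that deletion really produces a \emph{pendant} path so that Lemma~\ref{Path_add} applies verbatim, that contraction returns $G_{r-1}$ with the negative edge on page~$1$ (and the rest of $G$) undisturbed, and that the base case $r=2$ collapses the bracketed factor to the single term $1$. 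None of these is hard, and the argument is structurally identical to Lemma~\ref{B_m} and Proposition~\ref{prop_1}.
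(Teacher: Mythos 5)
Your proposal is correct and follows essentially the same route as the paper: repeated deletion--contraction along the positive edges of the new $n$-th page, with Lemma~\ref{Path_add} handling the pendant paths produced by each deletion, telescoping down to the doubled positive edge $uv$. Your write-up is in fact more explicit than the paper's (naming the intermediate graphs $G_r$ and verifying the base case $r=2$), but the underlying argument is identical.
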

\begin{proof}
Let $B_{1}(m,n)$ be the signed Book graph with signature $\{uu_{1}^{1}\}$, and consider $e_{1} = uu_{1}^{n}$. Using edge deletion-contraction formula on $e_{1}$, we get $$\chi_{B_{1}(m,n)}(2k+1) = \chi_{B_{1}'(m,n-1)}(2k+1) - \chi_{B_{1}''(m,n)}(2k+1),$$ where $B_{1}'(m,n-1)$ denotes a graph obtained from $B_{1}(m,n-1)$ by attaching a path $P_{m-1}$ at vertex $v$ and $B_{1}''(m,n)$ denotes a graph which is almost same as $B_{1}(m,n)$ but one of its cycle is of length $m-1$. 

Using Lemma~\ref{Path_add}, the chromatic polynomial of the signed graph $B_{1}'(m,n-1)$ can be calculated in terms of $\chi_{B_{1}(m,n-1)}(2k+1)$. For $B_{1}''(m,n)$, we apply the edge deletion-contraction formula again on the edge $e_{2} = uu_{2}^{n}$. Thus repeated application of edge deletion-contraction formula and Lemma~\ref{Path_add} imply that the chromatic polynomial of $B_{1}(m,n)$ is
\begin{align*}
\chi_{B_{1}(m,n)}(2k+1) & = (2k)^{m-2} \chi_{B_{1}(m,n-1)}(2k+1) - (2k)^{m-3} \chi_{B_{1}(m,n-1)}(2k+1)+ \cdots + \\
&~~~~  (-1)^{m-2} (2k)^{(m-2)-(m-2)} \chi_{B_{1}(m,n-1)}(2k+1) \\
 & = \Big[ \sum_{i=0}^{m-2} (-1)^{i} (2k)^{(m-2)-i} \Big] \chi_{B_{1}(m,n-1)}(2k+1).
\end{align*}
This completes the proof.
\end{proof}

\begin{theorem} \label{B_{1}*}
For $n \geq 2$, the chromatic polynomial of $B_{1}^{*}(m,n)$, is given by
\begin{equation*}
 \chi_{B^{*}_{1}(m,n)}(2k+1) = \Big[ \sum_{i=0}^{m-3} (-1)^{i} (2k)^{(m-2)-i} \Big] \chi_{B^{*}_{1}(m,n-1)}(2k+1) + (-1)^{m-2} \chi_{B_{m}^{n-1}}(2k+1).
\end{equation*} 
\end{theorem}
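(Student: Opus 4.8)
The plan is to mirror the proof of Theorem~\ref{B_1}, peeling off the $n$-th page of $B^{*}_1(m,n)$ by repeated edge deletion-contraction. In $B^{*}_1(m,n)$ the only negative edge is the common edge $uv$; every edge of the $n$-th page $C_m^n = u u_1^n u_2^n \cdots u_{m-2}^n v u$ other than $uv$ is positive, so Theorem~\ref{E-C} applies to each of them. First I would apply the edge deletion-contraction formula to $e_1 = u u_1^n$. The deletion branch $B^{*}_1(m,n)\setminus e_1$ is exactly $B^{*}_1(m,n-1)$ with a pendant path on the $m-2$ vertices $u_{m-2}^n,\dots,u_1^n$ attached at $v$; by Lemma~\ref{Path_add} its chromatic polynomial equals $(2k)^{m-2}\chi_{B^{*}_1(m,n-1)}(2k+1)$. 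The contraction branch $B^{*}_1(m,n)/e_1$ is the same book, but with the $n$-th page shortened to an unbalanced $(m-1)$-cycle through $u$ and $v$ (the edge $uv$ remaining negative).

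This suggests introducing the family $H_r$, the signed graph whose first $n-1$ pages are positive $m$-cycles sharing the negative edge $uv$ and whose $n$-th page is an unbalanced $r$-cycle $C_r^{-}$ through $u$ and $v$ with $uv$ negative, so that $H_m = B^{*}_1(m,n)$. Repeating the above argument verbatim at each stage gives the recursion
\begin{equation*}
\chi_{H_r}(2k+1) = (2k)^{r-2}\,\chi_{B^{*}_1(m,n-1)}(2k+1) - \chi_{H_{r-1}}(2k+1), \qquad 3 \le r \le m,
\end{equation*}
since deleting the current positive edge at $u$ always produces a pendant path of $r-2$ new vertices at $v$ (Lemma~\ref{Path_add}) while contracting it yields $H_{r-1}$.

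Unfolding this recursion from $r=m$ downward collects the alternating sum $\big[\sum_{i=0}^{m-3}(-1)^i(2k)^{(m-2)-i}\big]\chi_{B^{*}_1(m,n-1)}(2k+1)$ together with a boundary term $(-1)^{m-2}\chi_{H_2}(2k+1)$. The main point --- and the place where this proof departs from that of Theorem~\ref{B_1} --- is the identification of the base case $H_2$. Here the $n$-th page has collapsed to a single positive edge $uv$ lying parallel to the negative common edge $uv$, so between $u$ and $v$ there now sits an unbalanced digon $C_2^{-}$, while the first $n-1$ pages still hang on the negative $uv$ edge. This is precisely the signed graph $B_m^{n-1}$ obtained from $B(m,n-1)$ by replacing $uv$ with $C_2^{-}$; matching vertices, edges and signs shows $H_2 \cong B_m^{n-1}$, whence $\chi_{H_2}(2k+1) = \chi_{B_m^{n-1}}(2k+1)$. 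Substituting this yields the claimed formula.

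I expect the delicate step to be this base case rather than the routine recursion. In Theorem~\ref{B_1} the negative edge sits on a page, so the final contraction creates a \emph{balanced} double edge $uv$ that collapses to $B_1(m,n-1)$ and lets the alternating sum run all the way to $i=m-2$. Because here the negative edge is the shared edge $uv$ itself, the final contraction instead creates an \emph{unbalanced} digon, which cannot be simplified away; this is exactly why the sum stops one term earlier at $i=m-3$ and the extra summand $(-1)^{m-2}\chi_{B_m^{n-1}}(2k+1)$ survives. Getting this distinction right, and checking that every contracted edge is positive (as required by Theorem~\ref{E-C}) and that contraction never disturbs the first $n-1$ pages, is the crux of the argument.
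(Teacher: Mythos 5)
Your proposal is correct and follows essentially the same route as the paper: repeated deletion--contraction along the positive edges of the $n$-th page, with Lemma~\ref{Path_add} handling each deletion branch and the final contraction identified as the unbalanced digon graph $B_m^{n-1}$. Your explicit family $H_r$ and the verification that $H_2 \cong B_m^{n-1}$ merely make precise what the paper describes informally as the graph ``almost same as $B^{*}_{1}(m,n)$ but one of its cycles is of length $m-1$'' and its terminal case.
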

\begin{proof}
Let $B^{*}_{1}(m,n)$ be the signed Book graph with signature $\{uv\}$, and consider $e_{1} = uu_{1}^{n}$. Using edge deletion-contraction formula on $e_{1}$, we have $$\chi_{B^{*}_{1}(m,n)}(2k+1) = \chi_{\widetilde{B}_1^*(m,n-1)}(2k+1) - \chi_{\widetilde{\widetilde{B}}_1^*(m,n)}(2k+1),$$
 where $\widetilde{B}_1^*(m,n-1)$ denotes a graph which is obtained from $B^{*}_{1}(m,n-1)$ by attaching a path $P_{m-1}$ at the vertex $v$ and the graph $\widetilde{\widetilde{B}}_1^*(m,n)$ denotes a graph which is almost same as $B^{*}_{1}(m,n)$ but one of its cycles is of length $m-1$.
  
Using Lemma~\ref{Path_add}, the chromatic polynomial of the signed graph $\widetilde{B}_1^*(m,n-1)$ can be obtained in terms of $\chi_{B^{*}_{1}(m,n-1)}(2k+1)$. For the graph $\widetilde{\widetilde{B}}_1^*(m,n)$, we apply the edge deletion-contraction formula again on the edge $e_{2} = uu_{2}^{n}$. 

Thus, by the repeated application of edge deletion-contraction formula and Lemma~\ref{Path_add}, we see that the chromatic polynomial of $B^{*}_{1}(m,n)$ is 
\begin{align*}
\chi_{B^{*}_{1}(m,n)}(2k+1) & = (2k)^{m-2} \chi_{B^{*}_{1}(m,n-1)}(2k+1) - (2k)^{m-3} \chi_{B^{*}_{1}(m,n-1)}(2k+1)+ \cdots + \\
&~~~ (-1)^{m-3} (2k)^{(m-2)-(m-3)} \chi_{B^{*}_{1}(m,n-1)}(2k+1) + (-1)^{m-2} (2k)^{(m-2)-(m-2)} \chi_{B_{m}^{n-1}}(2k+1)\\
  & = \Big[ \sum_{i=0}^{m-3} (-1)^{i} (2k)^{(m-2)-i} \Big] \chi_{B^{*}_{1}(m,n-1)}(2k+1) + (-1)^{m-2} \chi_{B_{m}^{n-1}}(2k+1).
\end{align*}
Note that, in the last step of edge deletion-contraction formula, the resultant graph is nothing but the graph $B_{m}^{n-1}$. This completes the proof.
\end{proof}

\noindent
Now we give a recursive formula for the chromatic polynomials of $B_{l}(m,n)$ and $B_{l}^{*}(m,n)$ in the following two theorems.

\begin{theorem} \label{B_l}
For $n \geq 2$ and $2 \leq l \leq \lceil \frac{n}{2} \rceil$, the chromatic polynomial of $B_{l}(m,n)$ is given by  
\begin{equation*}
 \chi_{B_{l}(m,n)}(2k+1) = \Big[ \sum_{i=0}^{m-2} (-1)^{i} (2k)^{(m-2)-i} \Big] \chi_{B_{l}(m,n-1)}(2k+1).
\end{equation*} 
\end{theorem}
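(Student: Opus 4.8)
The plan is to follow the proof of Theorem~\ref{B_1} (the case $l=1$) almost verbatim, the only new ingredient being a bookkeeping check that none of the manipulations ever touch the $l$ negative edges. First I would record the two structural facts that make this possible. Since $2 \le l \le \lceil n/2\rceil$ and $n \ge 2$, we have $\lceil n/2\rceil < n$, hence $l < n$; therefore the $n$-th page $C_m^n = uu_1^n u_2^n \cdots u_{m-2}^n v u$ of $B_l(m,n)$ carries none of the negative edges $uu_1^1,\dots,uu_1^l$, so this page is balanced and all of its edges are positive. Moreover the signature of $B_l$ does not contain the common edge $uv$, so $uv$ is positive as well. (For $n=2$ the hypothesis forces $2\le l\le 1$, so the statement is vacuous.)

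Next I would peel the $n$-th page off by repeated edge deletion-contraction. Starting from the positive edge $e_1 = uu_1^n$, Theorem~\ref{E-C} gives $\chi_{B_l(m,n)}(2k+1) = \chi_{B_l(m,n)\setminus e_1}(2k+1) - \chi_{B_l(m,n)/e_1}(2k+1)$. Deleting $e_1$ detaches the $n$-th page into a path $P_{m-1}$ hanging from $v$ while leaving the first $n-1$ pages, together with their signature $\{uu_1^1,\dots,uu_1^l\}$, exactly equal to $B_l(m,n-1)$; by Lemma~\ref{Path_add} this contributes $(2k)^{m-2}\chi_{B_l(m,n-1)}(2k+1)$. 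Contracting $e_1$ identifies $u$ with $u_1^n$ and turns the $n$-th page into a balanced $(m-1)$-cycle, all other pages and their signs being untouched. I would then apply the same step to $e_2 = uu_2^n$, then $e_3 = uu_3^n$, and so on, shortening the $n$-th page one edge at a time.

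The core of the argument is that each step reproduces the previous situation: the edge contracted lies on the still-balanced $n$-th page and is therefore positive, so Theorem~\ref{E-C} applies again, and the deletion term at the $j$-th step is a path $P_{m-j}$ attached at $v$ to a copy of $B_l(m,n-1)$, contributing $(-1)^{j-1}(2k)^{(m-2)-(j-1)}\chi_{B_l(m,n-1)}(2k+1)$ by Lemma~\ref{Path_add}. After $m-2$ such steps the $n$-th page has collapsed to a doubled edge between $u$ and $v$; because the common edge $uv$ is positive, these two parallel positive edges impose the same coloring constraint and merge, so the terminal contracted graph is exactly $B_l(m,n-1)$, carried with sign $(-1)^{m-2}$. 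This last term supplies the exponent-$0$ summand, and assembling all contributions yields $\big[\sum_{i=0}^{m-2}(-1)^{i}(2k)^{(m-2)-i}\big]\chi_{B_l(m,n-1)}(2k+1)$, as claimed.

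The only real obstacle is the invariance check, and it is exactly where the hypotheses are used. One must confirm that the $n$-th page is balanced at the outset (forced by $l<n$, i.e.\ by $l\le\lceil n/2\rceil$) and stays all-positive under each contraction, that the untouched first $n-1$ pages retain precisely the signature defining $B_l(m,n-1)$, and—crucially—that $uv$ is positive (because $uv\notin\Sigma$), so that the terminal doubled edge reduces cleanly to $B_l(m,n-1)$ with \emph{no} extra correction term. This last point is what distinguishes the present case from $B_l^{*}(m,n)$, whose negative common edge produces a residual $C_2^-$ and hence the additional $\chi_{B_m^{n-1}}$ term seen in Theorem~\ref{B_{1}*}. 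Once this invariance is verified, the computation is identical to that of Theorem~\ref{B_1} with $B_1$ replaced throughout by $B_l$.
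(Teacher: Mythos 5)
Your proposal is correct and matches the paper's approach: the paper's proof of Theorem~\ref{B_l} simply states that it follows the same deletion--contraction steps as Theorem~\ref{B_1}, which is exactly what you carry out. Your additional checks—that $l<n$ keeps the $n$-th page balanced, and that the positive common edge $uv$ makes the terminal doubled edge collapse to $B_l(m,n-1)$ without a correction term—are precisely the details the paper leaves implicit.
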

\begin{proof}
The proof requires the same steps as we performed in the proof of Theorem~\ref{B_1}. 
\end{proof}

We note that the expression for the chromatic polynomial of $B_{l}(m,n)$, where $l$ satisfies $\lceil \frac{n}{2} \rceil < l \leq n$, is same as the expression given in Theorem~\ref{B_l}. So we can use Theorem~\ref{B_l} for all $l$ satisfying $1 \leq l \leq n$.

\begin{theorem} \label{B_l*}
For $n \geq 2$ and $2 \leq l \leq \lceil \frac{n}{2} \rceil$, the chromatic polynomial of $B^{*}_{l}(m,n)$ is given by  
\begin{equation*}
 \chi_{B^{*}_{l}(m,n)}(2k+1) = \Big[ \sum_{i=0}^{m-2} (-1)^{i} (2k)^{(m-2)-i} \Big] \chi_{B^{*}_{l-1}(m,n-1)}(2k+1).
\end{equation*} 
\end{theorem}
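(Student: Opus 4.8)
The plan is to imitate the edge deletion--contraction arguments used for Theorems~\ref{B_1} and~\ref{B_{1}*}, applied to a suitably chosen page. The first observation I would record is that, for $l\geq 2$, the signature $\{uv,uu_1^1,\ldots,uu_1^{l-1}\}$ endows each page $C_m^1,\ldots,C_m^{l-1}$ with exactly two negative edges, namely the common edge $uv$ and the edge $uu_1^j$, so that these pages are \emph{balanced}; the remaining pages $C_m^l,\ldots,C_m^n$ carry only the negative edge $uv$ and are unbalanced. Since the asserted recursion drops both $n$ and $l$ by one, the natural step is to peel off one balanced page: removing it deletes a copy of $C_m$ and discards one negative edge $uu_1^j$, which is exactly the transition from $B^{*}_{l}(m,n)$ to $B^{*}_{l-1}(m,n-1)$. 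Note that a balanced page exists precisely because $l\geq 2$, which is the hypothesis of the theorem.

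Concretely, I would peel the page $C_m^{l-1}$, whose two negative edges are $uu_1^{l-1}$ and $uv$. Because $uu_1^{l-1}$ is negative I cannot begin at the $u$-end as in Theorem~\ref{B_1}; instead I would apply Theorem~\ref{E-C} to the positive edges of this page working inward from the $v$-end, first to $u_{m-2}^{l-1}v$, then to the edge freshly incident to $v$, and so on. At each step the deletion branch detaches a pendant path rooted at $u$ (which still carries the negative edge $uu_1^{l-1}$); such a pendant path is balanced and may be switched to an all-positive path without disturbing the rest of the graph, so by switching invariance of $\chi$ together with Lemma~\ref{Path_add} the deletion branch contributes a power of $(2k)$ times $\chi_{B^{*}_{l-1}(m,n-1)}(2k+1)$, the first such power being $(2k)^{m-2}$. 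The contraction branch merely shortens $C_m^{l-1}$ by one edge, so iterating through the $m-2$ positive edges of the page assembles the alternating sum $(2k)^{m-2}-(2k)^{m-3}+\cdots$ multiplying $\chi_{B^{*}_{l-1}(m,n-1)}(2k+1)$, in the same manner as the proofs of Lemma~\ref{B_m} and Theorem~\ref{B_1}.

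The step I expect to be decisive is the terminal contraction. Once all positive edges of $C_m^{l-1}$ have been contracted into $v$, the negative edge $uu_1^{l-1}$ becomes a second $uv$-edge, and since it is negative the page collapses to a digon \emph{both} of whose edges are negative. In the signed setting two parallel negative edges impose only the single condition $c(u)\neq -c(v)$, so this balanced digon is equivalent to a single negative edge $uv$, and the graph left behind is exactly $B^{*}_{l-1}(m,n-1)$. This is the crucial contrast with Theorem~\ref{B_{1}*}: there the peeled page is unbalanced, its terminal digon is the unbalanced $C_2^-$, and an extra term $\chi_{B_m^{n-1}}(2k+1)$ is forced; here the peeled page is balanced, the terminal digon is balanced, and the last term is again a multiple of $\chi_{B^{*}_{l-1}(m,n-1)}(2k+1)$, namely $(-1)^{m-2}\chi_{B^{*}_{l-1}(m,n-1)}(2k+1)$. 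Combining it with the earlier terms yields $\big[\sum_{i=0}^{m-2}(-1)^i(2k)^{(m-2)-i}\big]\chi_{B^{*}_{l-1}(m,n-1)}(2k+1)$, as claimed.
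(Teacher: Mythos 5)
Your proof is correct, but it follows a genuinely different route from the paper's. The paper disposes of this theorem in two lines by switching: resigning at $u$ (which has degree $n+1$) turns the size-$l$ signature containing $uv$ into the size-$(n+1-l)$ signature avoiding $uv$, so $\chi_{B^{*}_{l}(m,n)}=\chi_{B_{n+1-l}(m,n)}$; Theorem~\ref{B_l} then gives the recursion for $B_{n+1-l}(m,n)$, and a second switch on the smaller graph identifies $B_{n+1-l}(m,n-1)$ with $B^{*}_{l-1}(m,n-1)$ since $n-(n+1-l)=l-1$. You instead redo the deletion--contraction computation of Theorems~\ref{B_1} and~\ref{B_{1}*} directly on the page $C_m^{l-1}$, and your bookkeeping checks out: the pendant paths produced by the deletion branches are trees, hence can be switched all-positive without touching the rest of the graph so that Lemma~\ref{Path_add} applies, giving the coefficients $(2k)^{m-2},\dots,(2k)^{1}$ with alternating signs; and the terminal contraction produces a digon with two parallel negative $uv$-edges, both encoding the single condition $c(v)\neq -c(u)$, so the last term is $(-1)^{m-2}\chi_{B^{*}_{l-1}(m,n-1)}(2k+1)$ rather than the extra $\chi_{B_m^{n-1}}$ term of Theorem~\ref{B_{1}*}. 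What your argument buys is a structural explanation of why the $l\geq 2$ recursion is ``clean'': the hypothesis $l\geq 2$ guarantees a balanced page to peel, whereas in Theorem~\ref{B_{1}*} every page is unbalanced and the terminal digon is $C_2^-$, forcing the correction term. What the paper's argument buys is brevity and the avoidance of any new computation, at the cost of leaning entirely on switching invariance and on Theorem~\ref{B_l} (whose own proof is only sketched there by reference to Theorem~\ref{B_1}).
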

\begin{proof}
By switching at $u$, we see that the graph $B^{*}_{l}(m,n)$ is switching equivalent to the graph $B_{l^{\prime}}(m,n)$, where $l^{\prime} = (n+1)-l$. Since the chromatic polynomial of a signed graph is switching invariant, we have $\chi_{B^{*}_{l}(m,n)}(2k+1) = \chi_{B_{l^{\prime}}(m,n)}(2k+1)$. By Theorem~\ref{B_l}, we get 
\begin{equation} \label{eq7}
\chi_{B_{l^{\prime}}(m,n)}(2k+1) = \Big[ \sum_{i=0}^{m-2} (-1)^{i} (2k)^{(m-2)-i} \Big] \chi_{B_{l^{\prime}}(m,n-1)}(2k+1).
\end{equation} 
By switching at $u$, we see that the graph $B_{l^{\prime}}(m,n-1)$ is switching equivalent to $B^{*}_{n-l^{\prime}}(m,n-1)$. Since $n - l^{\prime} = l-1$, we have $$\chi_{B_{l^{\prime}}(m,n-1)}(2k+1) = \chi_{B^{*}_{n-l^{\prime}}(m,n-1)}(2k+1) = \chi_{B^{*}_{l-1}(m,n-1)}(2k+1).$$ Hence from Equation~(\ref{eq7}), we get 
\begin{equation*}
\chi_{B^{*}_{l}(m,n)}(2k+1) = \Big[ \sum_{i=0}^{m-2} (-1)^{i} (2k)^{(m-2)-i} \Big] \chi_{B^{*}_{l-1}(m,n-1)}(2k+1).
\end{equation*} 
This completes the proof.
\end{proof}

\section{Balanced Chromatic Polynomial of Signed Book Graph}\label{balanced-poly}

Recall that the \textit{balanced (zero-free)} chromatic polynomial $\chi_{(G,\sigma)}^{b}(\lambda)$ of a signed graph $(G,\sigma)$, defined for even positive arguments $\lambda = 2k$, is the function whose value equals the number of proper zero-free signed colorings of $(G,\sigma)$ in $2k$ signed colors.

In \cite{Zaslavsky2}, the author explained that the ordinary chromatic polynomial and the balanced chromatic polynomial of a signed graph $(G,\sigma)$ are different unless $(G,\sigma)$ is balanced. Let $(G,\sigma)$ be a signed graph with vertex set $\{1,2, \cdots, n\}$ and $B_{(G,\sigma)}$ be its signed graphic arrangement. We denote the collection $B_{(G,\sigma)} \cup \{h_{1}, h_{2}, \cdots, h_{n}\}$ by $B_{(G,\sigma)}^{0}$. Consequently, a signed coloring $c$ of a signed graph is proper and balanced (zero-free) if and only if as a point in $\mathbb{Z}^{n}$, it avoids each hyperplane of $B_{(G,\sigma)}^{0}$. 

For example, Let $C_{3}^{-}$ be an unbalanced cycle of length three as shown in Figure~\ref{C3}, and its signed graphic hyperplane arrangement is $B_{C_{3}^{-}} = \{h_{12}^{+}, h_{13}^{+}, h_{23}^{-}\}$. Thus $B_{C_{3}^{-}}^{0} = \{h_{12}^{+}, h_{13}^{+}, h_{23}^{-}, h_{1}, h_{2}, h_{3}\}$. The poset $\mathcal{P}(C_{3}^{-}) = \{ h_{12}^{-}, h_{13}^{-}, h_{23}^{+} \}$ is shown as a Hasse diagram in Figure~\ref{lat3}

\begin{figure}[h]
\centering 
\begin{tikzpicture}[scale=0.6]
%\draw [help lines] (0,0) grid (85,35);
\node[vertex] (v1) at (10,0) {};
\node[vertex] (v2) at (14,0) {};
\node[vertex] (v3) at (12,2.5) {};
\node at (10,-0.6) {2};
\node at (14,-0.6) {1};
\node at (12,3.1) {3};
\foreach \from/\to in {v1/v2,v2/v3} \draw (\from) -- (\to);
\draw [dashed] (10,0) -- (12,2.5);
\end{tikzpicture}
\caption{An unbalanced cycle of length three} \label{C3}
\end{figure}
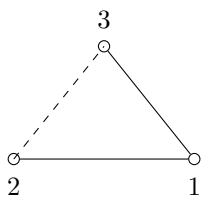

Theorem~\ref{Brian2} is also true in case of balanced chromatic polynomial, and we state this as follows.

\begin{theorem}\cite{Brian}\label{Brian3}
The balanced chromatic polynomial of a signed graph $(G,\sigma)$ on $n$ vertices is given by 
\[\chi_{(G,\sigma)}^b(2k) = \sum_{i=0}^{n} w_{i}^{\mathcal{P}(G,\sigma)} 2^{n-i} (k)_{n-i}.\]
\end{theorem}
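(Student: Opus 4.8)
The plan is to deduce Theorem~\ref{Brian3} from Theorem~\ref{Brian2} by encoding the zero-free (balanced) condition as negative loops. Given $(G,\sigma)$ on vertex set $\{1,\ldots,n\}$, I would form the signed graph $(G^{0},\sigma^{0})$ obtained by attaching a negative loop at every vertex. As noted in Section~\ref{signed-coloring}, the properness condition $c(v)\neq \sigma(e)c(u)$ applied to a negative loop $e=vv$ reads $c(v)\neq -c(v)$, i.e.\ $c(v)\neq 0$. Hence a signed coloring of $(G^{0},\sigma^{0})$ in $2k+1$ colors is proper exactly when it is proper for $(G,\sigma)$ and takes no value $0$; that is, it is precisely a proper zero-free coloring of $(G,\sigma)$ in the $2k$ nonzero colors. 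This yields the identity $\chi^{b}_{(G,\sigma)}(2k)=\chi_{(G^{0},\sigma^{0})}(2k+1)$.

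First I would check the arrangement-level translation so that the posets match. In hyperplane terms a negative loop at $v$ contributes the coordinate hyperplane $h_{v}$, so $B_{(G^{0},\sigma^{0})}=B_{(G,\sigma)}\cup\{h_{1},\ldots,h_{n}\}=B^{0}_{(G,\sigma)}$. Consequently $\mathcal{C}(G^{0},\sigma^{0})=BC_{n}\setminus B^{0}_{(G,\sigma)}$, and its intersection lattice is exactly the poset $\mathcal{P}(G,\sigma)$ used in Theorem~\ref{Brian3} (this is the lattice illustrated by the balanced-case example $\mathcal{P}(C_{3}^{-})=\{h_{12}^{-},h_{13}^{-},h_{23}^{+}\}$, where the $h_{i}$ have been absorbed into $B^{0}$). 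Since $(G^{0},\sigma^{0})$ still has $n$ vertices, applying Theorem~\ref{Brian2} to it gives
\[
\chi_{(G^{0},\sigma^{0})}(2k+1)=\sum_{i=0}^{n} w_{i}^{\mathcal{P}(G^{0},\sigma^{0})}\,2^{n-i}(k)_{n-i}=\sum_{i=0}^{n} w_{i}^{\mathcal{P}(G,\sigma)}\,2^{n-i}(k)_{n-i},
\]
and combining with the identity above yields the claimed formula.

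If instead a self-contained argument is wanted, I would mimic the flat-stratification proof of Theorem~\ref{Brian2} directly. Here $\chi^{b}_{(G,\sigma)}(2k)$ counts integer points of the grid $\{-k,\ldots,k\}^{n}$ that avoid every hyperplane of $B^{0}_{(G,\sigma)}$, and I would sort these points by the unique flat $F\in\mathcal{P}(G,\sigma)$ spanned by the hyperplanes of $\mathcal{C}(G,\sigma)=BC_{n}\setminus B^{0}_{(G,\sigma)}$ that pass through the point. A flat of rank $i$ imposes $i$ independent relations of the form $x_{a}=\pm x_{b}$, fusing the coordinates into $n-i$ sign-classes and leaving none forced to $0$; the points lying in its relative interior and off $B^{0}_{(G,\sigma)}$ are exactly the assignments of nonzero values to the $n-i$ classes with pairwise distinct absolute values, of which there are $2^{n-i}(k)_{n-i}$. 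Summing over all $F$ reproduces the formula.

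The main obstacle is the same delicate bookkeeping that already underlies Theorem~\ref{Brian2}: one must verify that a flat of $\mathcal{C}(G,\sigma)$ which happens to be contained in some hyperplane of $B^{0}_{(G,\sigma)}$ contributes nothing (every point on it is improper) and is therefore correctly absent from $\mathcal{P}(G,\sigma)$ — this is exactly why, for instance, the origin $h_{1}\cap h_{2}$ does not appear for $C_{2}^{-}$. In the loop-reduction route this subtlety is inherited from Theorem~\ref{Brian2} for free, which is the chief advantage of that approach; the only new point to confirm is the innocuous but essential fact that appending negative loops neither changes the vertex count nor introduces any hyperplane beyond the coordinate hyperplanes $h_{v}$, so that Theorem~\ref{Brian2} applies verbatim to $(G^{0},\sigma^{0})$.
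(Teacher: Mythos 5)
The paper does not actually prove this statement: Theorem~\ref{Brian3} is quoted from~\cite{Brian} with only the remark that Theorem~\ref{Brian2} ``is also true'' in the balanced case, so there is no in-paper proof to match your argument against. That said, your loop-reduction derivation is correct and is arguably the cleanest way to obtain the balanced formula from the cited unbalanced one. You correctly observe that attaching a negative loop at each vertex turns the zero-free condition into an ordinary properness condition, that $B_{(G^{0},\sigma^{0})}=B^{0}_{(G,\sigma)}$ because the paper's convention reads $h_{vv}^{\pm}$ as $h_{v}$, and hence that the poset appearing in Theorem~\ref{Brian3} is the intersection lattice of $BC_{n}\setminus B^{0}_{(G,\sigma)}$ rather than of $BC_{n}\setminus B_{(G,\sigma)}$ --- a point the paper leaves implicit, since it reuses the symbol $\mathcal{P}(G,\sigma)$ from Section~\ref{hyperplane} with a silently changed meaning (compare the example $\mathcal{P}(C_{3}^{-})=\{h_{12}^{-},h_{13}^{-},h_{23}^{+}\}$ in Section~\ref{balanced-poly}). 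Your identification of the one genuine subtlety --- that flats of $\mathcal{C}(G,\sigma)$ contained in a hyperplane of $B^{0}_{(G,\sigma)}$ (such as $h_{1}\cap h_{2}$ for $C_{2}^{-}$, or any flat forcing a coordinate to $0$) carry no admissible lattice points and must be absent from the poset --- is exactly right, and is needed for either route. The only hypothesis worth stating explicitly is that Theorem~\ref{Brian2} applies to signed graphs with negative loops; the paper's framework allows such graphs and defines their arrangements, so the reduction goes through. Your alternative direct stratification argument is also sound and would make the result self-contained rather than dependent on~\cite{Brian}.
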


\begin{figure}[h]
\centering
\begin{tikzpicture}
\node[vertex] (v1) at (0,0) {};
\node[vertex] (v1) at (-1.8,1.3) {};
\node[vertex] (v1) at (0,1.3) {};
\node[vertex] (v1) at (1.8,1.3) {};
\node[vertex] (v1) at (0,2.67) {};
\draw [thick] (0.05,0.03) -- (1.75,1.26);
\draw [thick] (-0.05,0.03) -- (-1.75,1.26);
\draw [thick] (0,0.05) -- (0,1.24);
\draw [thick] (0,1.36) -- (0,2.6);
\draw [thick] (1.75,1.34) -- (0.05,2.64);
\draw [thick] (-1.75,1.34) -- (-0.05,2.64);
\node at (0,-0.25) {\small{$\mathbb{R}^3$}};
\node at (2.4,1.3)  {\tiny{$x_{2}=+x_{3}$}};
\node at (-2.4,1.3) {\tiny{$x_{1}=-x_{2}$}}; 
\node at (0.55,1.3) {\tiny{$x_{1}=-x_{3}$}}; 
\node at (0,2.9) {\tiny{$x_1=-x_2=-x_3$}};
\end{tikzpicture}
\caption{The poset $\mathcal{P}(C_{3}^{-})$} \label{lat3}
\end{figure}
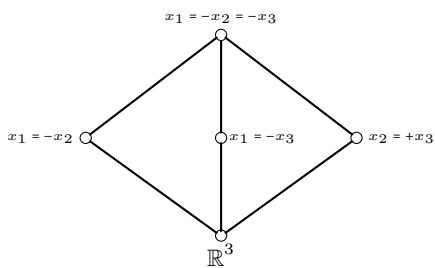 

It is clear that $w_{0}^{\mathcal{P}(C_{3}^{-})} = 1, w_{1}^{\mathcal{P}(C_{3}^{-})} = 3, w_{2}^{\mathcal{P}(C_{3}^{-})} = 1$, and $w_{3}^{\mathcal{P}(C_{3}^{-})} = 0$. 
Thus by Theorem~\ref{Brian3}, the balanced chromatic polynomial of $C_{3}^{-}$ is given by 
\begin{align*}
\chi_{C_{3}^{-}}^{b}(2k) & = 2^3(k)_3 + 3 \cdot 2^2(k)_2 + 2^1(k)_1 \\
 & = 8k(k-1)(k-2) + 12k(k-1) + 2k \\
 & = 8k^3-12k^2+6k.
\end{align*}

Similarly, the balanced chromatic polynomial of an unbalanced cycle of length two is obtained as
\begin{equation*}
\chi_{C_{2}^{-}}^{b}(2k) = 4k^2-4k.
\end{equation*}

Using Theorem~\ref{E-C}, we now give the recursive formula for balanced chromatic polynomial of unbalanced cycle of length $n$. 

\begin{lemm} \label{C_n}
For each $n \geq 2$, the balanced chromatic polynomial of an unbalanced cycle $C_{n}^{-}$ is given by 
$$\chi_{C_{n}^{-}}^{b}(2k) = 2k(2k-1)^{n-1} - \chi_{C_{n-1}^{-}}^{b}(2k).$$ 
\end{lemm}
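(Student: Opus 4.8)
The plan is to apply the balanced edge deletion-contraction formula of Theorem~\ref{E-C} a single time; no induction is required, since the claim is itself a recursion. First I would fix a convenient representative of $C_n^-$: by Theorem~\ref{Signature} every unbalanced $n$-cycle is switching equivalent to the one carrying exactly a single negative edge, so for $n \geq 2$ there is at least one positive edge $e$. Because the balanced chromatic polynomial is switching invariant, it suffices to compute $\chi_{C_n^-}^b(2k)$ for this representative. Applying the balanced half of Theorem~\ref{E-C} to the positive edge $e$ gives
\[
\chi_{C_n^-}^b(2k) = \chi_{C_n^- \setminus e}^b(2k) - \chi_{C_n^- / e}^b(2k),
\]
so the whole proof reduces to identifying the two graphs on the right-hand side.

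For the deletion term, removing any edge from a cycle leaves a path $P_n$. A path, being a tree, contains no cycle and hence no unbalanced cycle, so by Theorem~\ref{Signature} it is switching equivalent to the all-positive path. Here the key computation is that for an all-positive graph the balanced chromatic polynomial merely counts ordinary proper colorings in the $2k$ nonzero signed colors: avoiding the hyperplanes $h_i : x_i = 0$ forces each coordinate to take one of the $2k$ nonzero values, while avoiding $h_{ij}^{+} : x_i = x_j$ is exactly the condition that adjacent vertices receive distinct values. Thus $\chi_{C_n^- \setminus e}^b(2k) = \chi_{P_n}(2k) = 2k(2k-1)^{n-1}$, using the path formula recorded in Example~\ref{ex1}.

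For the contraction term, contracting the positive edge $e = uv$ identifies its two endpoints and deletes $e$, leaving a cycle on $n-1$ vertices whose edge set is that of $C_n^-$ with $e$ removed. Since $e$ is positive, deleting it does not change the product of the signs of the remaining edges, which therefore stays negative; hence the contracted graph is an unbalanced $(n-1)$-cycle, i.e.\ $C_{n-1}^-$. When $n=2$ this degenerates to a single negative loop, which is the natural reading of $C_1^-$ and carries balanced chromatic polynomial $2k$, consistent with the already-recorded value $\chi_{C_2^-}^b(2k) = 4k^2 - 4k$. Substituting the two identifications into the deletion-contraction equation yields the asserted recursion.

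The argument is almost entirely bookkeeping, and the one place needing genuine care is the deletion term: I must justify both that the signed path arising from $C_n^- \setminus e$ is switching equivalent to the all-positive path and that the balanced chromatic polynomial of an all-positive graph coincides with its ordinary chromatic polynomial evaluated at $2k$. The contraction step is delicate only in confirming that removing a positive edge preserves the sign product—and hence the unbalancedness—of the shortened cycle, together with the boundary interpretation of $C_1^-$ as a negative loop.
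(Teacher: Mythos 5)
Your proof is correct and follows exactly the route the paper intends: the lemma is stated immediately after the remark ``Using Theorem~\ref{E-C}, we now give the recursive formula,'' and your single application of the balanced deletion--contraction formula to a positive edge, identifying the deletion with $P_n$ (balanced polynomial $2k(2k-1)^{n-1}$) and the contraction with $C_{n-1}^-$, is precisely that argument. Your careful handling of the $n=2$ boundary case (the negative loop) is a welcome addition that the paper omits.
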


We reformulate Lemma~\ref{Path_add} in terms of the balanced chromatic polynomial.

\begin{lemm} \label{Path_add1}
Let $(G,\sigma)$ be a signed graph. Then $$\chi_{(G,\sigma)^{l+1}}^{b}(2k) = (2k-1)^{l} \chi_{(G,\sigma)}^{b}(2k).$$
\end{lemm}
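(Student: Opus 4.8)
The plan is to prove Lemma~\ref{Path_add1} by induction on $l$, mirroring exactly the argument used for Lemma~\ref{Path_add} but replacing the signed chromatic polynomial $\chi(2k+1)$ by the balanced chromatic polynomial $\chi^b(2k)$ throughout. The two ingredients I would need are the deletion-contraction formula for the balanced polynomial (the second identity in Theorem~\ref{E-C}) and the base value $\chi_{P_{l+1}}^b(2k) = 2k(2k-1)^l$, which plays the role that $\chi_{P_{l+1}}(2k+1) = (2k+1)(2k)^l$ played before.

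First I would record the balanced path formula: a path $P_{l+1}$ on $l+1$ vertices, being balanced (in fact signature-free up to switching), has balanced chromatic polynomial $2k(2k-1)^l$, since each of the $2k$ nonzero colors may be chosen freely for the first vertex and each subsequent vertex must avoid the single forbidden value $\sigma(e)c(\text{previous})$, leaving $2k-1$ choices. This is the analogue of Example~\ref{ex1} for zero-free colorings. For the base case $l=1$, I attach the edge $e_1 = uu_1$ to a vertex $u$ of $G$ and apply the balanced deletion-contraction formula from Theorem~\ref{E-C} to $e_1$: deleting $e_1$ yields $(G,\sigma)$ with an isolated pendant vertex $u_1$, contributing a factor of $2k$, while contracting $e_1$ simply returns $(G,\sigma)$. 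Thus $\chi_{(G,\sigma)^2}^b(2k) = 2k\,\chi_{(G,\sigma)}^b(2k) - \chi_{(G,\sigma)}^b(2k) = (2k-1)\,\chi_{(G,\sigma)}^b(2k)$, establishing the $l=1$ case.

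For the inductive step I assume $\chi_{(G,\sigma)^m}^b(2k) = (2k-1)^{m-1}\chi_{(G,\sigma)}^b(2k)$ and consider $(G,\sigma)^{m+1}$ with attached path $uu_1u_2\ldots u_m$. Applying the balanced deletion-contraction formula to the edge $e = uu_1$ gives
\begin{align*}
\chi_{(G,\sigma)^{m+1}}^b(2k) &= \chi_{P_m \cup (G,\sigma)}^b(2k) - \chi_{(G,\sigma)^m}^b(2k),
\end{align*}
where $P_m$ and $(G,\sigma)$ are disjoint. The balanced polynomial is multiplicative over disjoint unions, so the first term equals $2k(2k-1)^{m-1}\chi_{(G,\sigma)}^b(2k)$ by the path formula, and the second term equals $(2k-1)^{m-1}\chi_{(G,\sigma)}^b(2k)$ by the induction hypothesis. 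Subtracting and factoring out $(2k-1)^{m-1}\chi_{(G,\sigma)}^b(2k)$ leaves the factor $2k - 1$, yielding $(2k-1)^m\chi_{(G,\sigma)}^b(2k)$, which closes the induction.

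I do not expect any genuine obstacle, since the entire argument is structurally identical to Lemma~\ref{Path_add}; the only point requiring a moment of care is verifying that the balanced polynomial is genuinely multiplicative over disjoint unions and that deleting $e_1$ truly produces an isolated vertex contributing the factor $2k$ (the number of nonzero colors) rather than $2k+1$. This is exactly where the zero-free constraint changes the arithmetic, so I would state it explicitly to justify why the base factor is $2k$ and the recursive factor is $2k-1$ rather than their odd-argument counterparts.
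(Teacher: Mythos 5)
Your proof is correct and is exactly the argument the paper intends: the paper omits the proof of Lemma~\ref{Path_add1}, presenting it as the balanced-polynomial reformulation of Lemma~\ref{Path_add}, and your induction via the balanced deletion-contraction identity of Theorem~\ref{E-C} together with $\chi_{P_{l+1}}^{b}(2k)=2k(2k-1)^{l}$ is precisely that analogue. Your explicit attention to why the isolated-vertex factor becomes $2k$ and the recursive factor $2k-1$ is the only point where the arithmetic differs, and you handle it correctly.
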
 

By repeated use of Theorem~\ref{E-C} and Lemma~\ref{Path_add1}, we rewrite Lemma~\ref{B_m} in terms of balanced chromatic polynomial.

\begin{lemm} \label{B_m1}
For $m \geq 2$, the balanced chromatic polynomial of $B_{m}^{1}$ is given by
 $$\chi_{B_{m}^{1}}^{b}(2k) = \Big[ \sum_{i=0}^{m-2} (-1)^{i} (2k-1)^{(m-2)-i} \Big]  \chi_{C_{2}^{-}}^{b}(2k)$$
\end{lemm}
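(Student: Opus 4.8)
The plan is to mirror exactly the inductive argument already carried out for the ordinary chromatic polynomial in Lemma~\ref{B_m}, but now using the balanced (zero-free) versions of every tool. The statement of Lemma~\ref{B_m1} is the balanced analogue of Lemma~\ref{B_m}, and since Theorem~\ref{E-C} guarantees that the deletion-contraction formula holds verbatim for $\chi^{b}$, and Lemma~\ref{Path_add1} is the balanced reformulation of the path-attachment Lemma~\ref{Path_add}, the entire skeleton of the proof transfers. I would proceed by induction on $m$, with base case $m=2$ supplied by the already-computed value $\chi_{C_{2}^{-}}^{b}(2k) = 4k^{2}-4k$, which agrees with the empty-sum-plus-leading-term formula when $m=2$.

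First I would set up the graph $B_{m}^{1}$ exactly as in Figure~\ref{figure6.1}, namely the unbalanced $2$-cycle on $\{u,v\}$ together with the path $uu_{1}u_{2}\cdots u_{m-2}v$ closing up the $m$-cycle, and fix the positive edge $e_{1}=uu_{1}$. Applying the balanced deletion-contraction formula from Theorem~\ref{E-C} to $e_{1}$ splits $\chi_{B_{r}^{1}}^{b}(2k)$ into two pieces, just as in Figure~\ref{Un_1}: the deletion produces $C_{2}^{-}$ with a pendant path $P_{m-2}$ hanging at $u$, whose balanced chromatic polynomial is $(2k-1)^{m-2}\chi_{C_{2}^{-}}^{b}(2k)$ by Lemma~\ref{Path_add1}; the contraction produces the graph $B_{r-1}^{1}$, whose balanced chromatic polynomial is handled by the induction hypothesis. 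Writing out these two contributions and re-indexing the inductive sum gives
\begin{align*}
\chi_{B_{r}^{1}}^{b}(2k) &= (2k-1)^{r-2}\,\chi_{C_{2}^{-}}^{b}(2k) - \sum_{i=0}^{r-3}(-1)^{i}(2k-1)^{(r-1-2)-i}\,\chi_{C_{2}^{-}}^{b}(2k)\\
 &= \Big[\sum_{j=0}^{r-2}(-1)^{j}(2k-1)^{(r-2)-j}\Big]\chi_{C_{2}^{-}}^{b}(2k),
\end{align*}
which is precisely the claimed formula for $m=r$. The only structural difference from the unsigned case is that each factor of $2k$ is replaced by $2k-1$, reflecting the loss of the zero color.

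The step I expect to require the most care is the bookkeeping in the contraction term: unlike in Lemma~\ref{B_m}, I must check that contracting the positive edge $e_{1}$ in the balanced setting genuinely yields $B_{r-1}^{1}$ (with its unbalanced $2$-cycle intact and no spurious negative loop created), so that the induction hypothesis applies to the balanced polynomial rather than to some switched or structurally altered graph. Because $e_{1}$ is positive and incident only to the path, contracting it shortens the $m$-cycle by one while leaving the unbalanced $2$-cycle on $\{u,v\}$ untouched, so no balancedness is destroyed and no loop appears; this is the point at which the argument could break if the chosen edge were negative or incident to the $2$-cycle. Once this identification is confirmed, the re-indexing of the geometric-type sum is routine, and the induction closes.
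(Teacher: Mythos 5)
Your proposal is correct and follows essentially the same route the paper intends: the paper omits the proof of Lemma~\ref{B_m1}, stating only that it follows from repeated use of Theorem~\ref{E-C} and Lemma~\ref{Path_add1} in exact analogy with the induction in Lemma~\ref{B_m}, and your deletion--contraction on $e_1=uu_1$ with the $(2k-1)^{l}$ path-attachment factor and the re-indexed sum is precisely that argument. The only nitpicks are notational (the pendant path hangs at $v$ rather than $u$, and you write $P_{m-2}$ where $P_{r-2}$ is meant), neither of which affects the computation.
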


Now we write all the results of Section~\ref{chrom-poly} in terms of the balanced chromatic polynomials and omit the proofs as all these proofs are analogous to the corresponding proofs in Section~\ref{chrom-poly}.

\begin{prop}\label{prop_1-1}
For $n \geq 2$, the balanced chromatic polynomial of $B_{m}^{n}$ is given by 
\[ \chi_{B_{m}^{n}}^{b}(2k) = \Big[ \sum_{i=0}^{m-2} (-1)^{i} (2k-1)^{(m-2)-i} \Big] \chi_{B_{m}^{n-1}}^{b}(2k).\]
\end{prop}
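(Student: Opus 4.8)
The plan is to mirror exactly the structure of the proof of Proposition~\ref{prop_1}, but with the balanced (zero-free) chromatic polynomial in place of the ordinary signed chromatic polynomial throughout. The graph $B_{m}^{n}$ is the Book graph $B(m,n)$ in which the common edge $uv$ has been replaced by an unbalanced two-cycle $C_{2}^{-}$; equivalently, $B_{m}^{n}$ consists of $n$ pages (each an $m$-cycle) glued along the unbalanced digon on $\{u,v\}$. Topologically, $B_{m}^{n}$ is obtained from $B_{m}^{n-1}$ by attaching one additional page, which after deleting the shared digon is a path $P_{m-1}$ connecting $u$ to $v$ through $m-2$ new vertices.

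First I would isolate a positive edge on the newly attached page, say $e_{1}=uu_{1}^{n}$, and apply the deletion-contraction identity for the balanced chromatic polynomial (Theorem~\ref{E-C}, second formula). Deleting $e_{1}$ leaves $B_{m}^{n-1}$ with a pendant path $P_{m-1}$ attached at $v$, whose balanced chromatic polynomial is handled by the path-attachment Lemma~\ref{Path_add1}, giving a factor $(2k-1)^{m-2}$ times $\chi_{B_{m}^{n-1}}^{b}(2k)$. Contracting $e_{1}$ shortens that page by one, producing a graph that is ``almost'' $B_{m}^{n}$ but with one page of length $m-1$; I then repeat deletion-contraction along $e_{2}=uu_{2}^{n}$, and so on, peeling off one vertex of the attached page at each stage. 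Each step contributes an alternating sign and one fewer power of $(2k-1)$, exactly as in Lemma~\ref{B_m1} and Proposition~\ref{prop_1}, and after $m-2$ steps the final contraction collapses the extra page entirely, leaving $B_{m}^{n-1}$ itself.

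Collecting the telescoping contributions yields
\[
\chi_{B_{m}^{n}}^{b}(2k) = \Big[ \sum_{i=0}^{m-2} (-1)^{i} (2k-1)^{(m-2)-i} \Big] \chi_{B_{m}^{n-1}}^{b}(2k),
\]
which is the claimed recursion. Since every ingredient has already been recast in balanced form---Lemma~\ref{Path_add1} replaces Lemma~\ref{Path_add} with $(2k-1)$ in place of $(2k)$, and Lemma~\ref{B_m1} establishes the single-page ($n=1$) analogue---the argument is a line-for-line transcription of the unsigned-count proof, with $(2k)\mapsto(2k-1)$ as the only substantive change.

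The only point requiring genuine care, rather than routine transcription, is verifying that at each intermediate deletion-contraction the contracted graph really is a shortened-page version of $B_{m}^{n}$ whose balanced polynomial can be processed by Lemma~\ref{Path_add1}, and in particular that the unbalanced digon on $\{u,v\}$ is never destroyed by a contraction: one must always contract \emph{positive} edges lying on the attached page, never the negative digon edges, so that the balance structure recorded in $B_{(G,\sigma)}^{0}$ is preserved and Theorem~\ref{E-C} applies. This is exactly the subtlety already present (but not emphasized) in Proposition~\ref{prop_1}; because the paper explicitly defers to that proof's method, the present statement follows and the proof may legitimately be omitted.
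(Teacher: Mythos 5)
Your proposal is correct and matches the paper's intended argument: the paper omits this proof precisely because it is the line-for-line balanced analogue of Proposition~\ref{prop_1}, obtained by repeated deletion--contraction on the positive edges of one page together with Lemma~\ref{Path_add1}, which is exactly what you carry out. Your added remark about contracting only positive edges (so that Theorem~\ref{E-C} applies and the digon survives) is a worthwhile clarification, but it does not change the route.
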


\begin{theorem} \label{B_1-1}
For $n \geq 2$, the balanced chromatic polynomial of $B_{1}(m,n)$ is given by
\[ \chi_{B_{1}(m,n)}^{b}(2k) = \Big[ \sum_{i=0}^{m-2} (-1)^{i} (2k-1)^{(m-2)-i} \Big] \chi_{B_{1}(m,n-1)}^{b}(2k).\]
\end{theorem}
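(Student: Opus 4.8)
The plan is to mirror exactly the proof of Theorem~\ref{B_1}, replacing the ordinary signed chromatic polynomial $\chi$ by the balanced chromatic polynomial $\chi^b$ throughout, and using the balanced analogues of the deletion--contraction identity and the path-attachment lemma. First I would take $B_{1}(m,n)$ to be the signed Book graph with signature $\{uu_{1}^{1}\}$ and single out the edge $e_{1}=uu_{1}^{n}$, which is a positive edge lying on the last page. Since Theorem~\ref{E-C} supplies the balanced deletion--contraction formula $\chi^{b}_{(G,\sigma)}(2k)=\chi^{b}_{(G,\sigma)\setminus e}(2k)-\chi^{b}_{(G,\sigma)/e}(2k)$ for a positive edge $e$, applying it to $e_{1}$ gives
\[
\chi^{b}_{B_{1}(m,n)}(2k)=\chi^{b}_{B_{1}'(m,n-1)}(2k)-\chi^{b}_{B_{1}''(m,n)}(2k),
\]
where, exactly as in Theorem~\ref{B_1}, the graph $B_{1}'(m,n-1)$ is obtained from $B_{1}(m,n-1)$ by attaching a path $P_{m-1}$ at the vertex $v$, and $B_{1}''(m,n)$ agrees with $B_{1}(m,n)$ except that its last page is now an $(m-1)$-cycle.

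Next I would evaluate the two pieces. For the deleted term, Lemma~\ref{Path_add1} gives the balanced path-attachment rule $\chi^{b}_{(G,\sigma)^{l+1}}(2k)=(2k-1)^{l}\,\chi^{b}_{(G,\sigma)}(2k)$, so attaching $P_{m-1}$ at $v$ multiplies by $(2k-1)^{m-2}$, yielding $\chi^{b}_{B_{1}'(m,n-1)}(2k)=(2k-1)^{m-2}\,\chi^{b}_{B_{1}(m,n-1)}(2k)$. For the contracted term $B_{1}''(m,n)$ I would iterate the procedure: apply Theorem~\ref{E-C} again on $e_{2}=uu_{2}^{n}$, and repeat. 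Each round peels off one edge of the shortened last page, contributing a factor $(2k-1)^{(m-2)-i}$ with alternating sign, and after $m-2$ steps the last page is fully absorbed so the base graph $B_{1}(m,n-1)$ reappears. Collecting the telescoping contributions gives
\[
\chi^{b}_{B_{1}(m,n)}(2k)=\Big[\sum_{i=0}^{m-2}(-1)^{i}(2k-1)^{(m-2)-i}\Big]\chi^{b}_{B_{1}(m,n-1)}(2k),
\]
which is the claimed identity.

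The only real point of care is verifying that the recursion on the last page terminates correctly and that the factor emerging at each stage is precisely $(2k-1)^{(m-2)-i}$ rather than some off-by-one power; this is the balanced counterpart of the bookkeeping already carried out in the display in Theorem~\ref{B_1}, with $2k$ replaced by $2k-1$ in the path factors because Lemma~\ref{Path_add1} reads $(2k-1)^{l}$ in place of $(2k)^{l}$. I expect the main obstacle to be purely presentational: ensuring that every intermediate graph (the path-augmented $B_{1}(m,n-1)$ and each successively shortened page) remains a signed graph to which Lemma~\ref{Path_add1} legitimately applies, and that the positive edge chosen at each step genuinely lies outside any unbalanced cycle introduced by the signature $\{uu_{1}^{1}\}$, so that deletion--contraction is being applied to a positive edge as required by Theorem~\ref{E-C}. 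Since the signature lives entirely on the first page and the deletions/contractions are performed on the $n$-th page, no such conflict arises, and the argument goes through verbatim as in Theorem~\ref{B_1}. Hence the proof reduces to the statement ``the proof requires the same steps as in Theorem~\ref{B_1}, with $\chi$, Lemma~\ref{Path_add}, and $(2k)$ replaced by $\chi^{b}$, Lemma~\ref{Path_add1}, and $(2k-1)$ respectively.''
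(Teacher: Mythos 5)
Your proposal is correct and follows exactly the route the paper intends: the paper omits this proof, stating it is analogous to that of Theorem~\ref{B_1}, and your argument is precisely that analogue, substituting the balanced deletion--contraction identity of Theorem~\ref{E-C} and Lemma~\ref{Path_add1} (with its factor $(2k-1)^{l}$) for their unsigned-argument counterparts. The bookkeeping of the telescoping sum and the terminal reappearance of $B_{1}(m,n-1)$ matches the display in the proof of Theorem~\ref{B_1}.
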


\begin{theorem} \label{B_{1}*-1}
For $n \geq 2$, the balanced chromatic polynomial of $B_{1}^{*}(m,n)$ is given by
\[ \chi_{B^{*}_{1}(m,n)}^{b}(2k) = \Big[ \sum_{i=0}^{m-3} (-1)^{i} (2k-1)^{(m-2)-i} \Big] \chi_{B^{*}_{1}(m,n-1)}^{b}(2k) + (-1)^{m-2} \chi_{B_{m}^{n-1}}^{b}(2k).\]
\end{theorem}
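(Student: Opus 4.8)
The plan is to mirror exactly the proof of Theorem~\ref{B_{1}*} (the ordinary-chromatic-polynomial version) but carry every step through with the balanced chromatic polynomial in place of the ordinary one. The authors have already announced that all proofs in this balanced section are ``analogous to the corresponding proofs in Section~\ref{chrom-poly},'' so the task is to verify that the two ingredients driving the original argument survive the switch to $\chi^b$. Those ingredients are the signed edge deletion-contraction formula and the path-attachment lemma; the former holds verbatim for $\chi^b$ by the second identity in Theorem~\ref{E-C}, and the latter is precisely Lemma~\ref{Path_add1}. Since both tools are available in balanced form, the structural skeleton of the proof transfers without modification.

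Concretely, I would begin with the signed Book graph $B_1^*(m,n)$ carrying signature $\{uv\}$ and single out the positive edge $e_1=uu_1^n$ lying on the last page. Applying the balanced deletion-contraction formula of Theorem~\ref{E-C} to $e_1$ splits $\chi^b_{B_1^*(m,n)}(2k)$ into a deletion term and a contraction term, exactly as in the proof of Theorem~\ref{B_{1}*}. The deletion term is a graph obtained from $B_1^*(m,n-1)$ by attaching a path $P_{m-1}$ at $v$, whose balanced chromatic polynomial is computed in terms of $\chi^b_{B_1^*(m,n-1)}(2k)$ by Lemma~\ref{Path_add1} (picking up a factor $(2k-1)^{m-2}$). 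The contraction term is a copy of $B_1^*(m,n)$ with one page shortened to length $m-1$, and I would iterate deletion-contraction along the edges $e_2=uu_2^n, e_3=uu_3^n,\dots$ of that shortened page, at each stage peeling off a term of the form $(-1)^i(2k-1)^{(m-2)-i}\chi^b_{B_1^*(m,n-1)}(2k)$.

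The telescoping then produces the first summand $\big[\sum_{i=0}^{m-3}(-1)^i(2k-1)^{(m-2)-i}\big]\chi^b_{B_1^*(m,n-1)}(2k)$. The single point that requires genuine attention, and which I expect to be the main obstacle, is identifying the final residual graph after the last contraction: one must check that shrinking the last page all the way down turns the edge $uv$-signature into the unbalanced two-cycle $C_2^-$ replacing $uv$, so that the terminal graph is exactly $B_m^{n-1}$ rather than something merely switching-equivalent to it. This is the same subtle endpoint flagged in the remark closing the proof of Theorem~\ref{B_{1}*}, and it is where the $(-1)^{m-2}\chi^b_{B_m^{n-1}}(2k)$ term originates. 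Once that identification is confirmed, combining it with the telescoped sum yields
\[
\chi^b_{B_1^*(m,n)}(2k) = \Big[\sum_{i=0}^{m-3}(-1)^i(2k-1)^{(m-2)-i}\Big]\chi^b_{B_1^*(m,n-1)}(2k) + (-1)^{m-2}\chi^b_{B_m^{n-1}}(2k),
\]
which is the claimed recursion. Because every algebraic step is a substitution of $(2k-1)$ for $(2k)$ in an already-verified computation, no new enumeration or lattice analysis is needed beyond invoking Theorem~\ref{E-C}, Lemma~\ref{Path_add1}, and the structural identification of the terminal graph.
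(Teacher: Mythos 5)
Your proposal is correct and follows precisely the route the paper intends: the paper omits this proof, stating only that it is analogous to that of Theorem~\ref{B_{1}*}, and your argument carries out exactly that analogy, replacing the ordinary deletion--contraction and path-attachment steps with their balanced counterparts from Theorem~\ref{E-C} and Lemma~\ref{Path_add1}, including the correct identification of the terminal contracted graph as $B_m^{n-1}$. No gaps; this is the same approach.
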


\begin{theorem} \label{B_l-1}
For $n \geq 2$ and $2 \leq l \leq \lceil \frac{n}{2} \rceil$, the balanced chromatic polynomial of $B_{l}(m,n)$ is given by  
\[ \chi_{B_{l}(m,n)}^{b}(2k) = \Big[ \sum_{i=0}^{m-2} (-1)^{i} (2k-1)^{(m-2)-i} \Big] \chi_{B_{l}(m,n-1)}^{b}(2k).\]
\end{theorem}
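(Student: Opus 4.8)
The plan is to mirror the proof of Theorem~\ref{B_1-1} (equivalently, the ordinary-chromatic argument of Theorem~\ref{B_1} that Theorem~\ref{B_l} invokes), replacing every use of Lemma~\ref{Path_add} by its balanced counterpart Lemma~\ref{Path_add1} and every factor $(2k)$ by $(2k-1)$. Concretely, I would fix the signed book $B_{l}(m,n)$ with signature $\{uu_1^1,\ldots,uu_1^l\}$ and single out the edge $e_1=uu_1^n$ on the $n$-th page. Since $n\ge 2$ and $l\le\lceil n/2\rceil\le n-1$, none of the negative edges lies on page $n$; hence $e_1$ is positive and Theorem~\ref{E-C} applies, giving $\chi_{B_{l}(m,n)}^{b}(2k)=\chi_{\mathrm{del}}^{b}(2k)-\chi_{\mathrm{con}}^{b}(2k)$, where $\mathrm{del}$ and $\mathrm{con}$ denote the deletion and contraction of $e_1$.

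The deletion graph is $B_{l}(m,n-1)$ with the path $v\,u_{m-2}^n\cdots u_1^n$, a copy of $P_{m-1}$, attached at $v$; by Lemma~\ref{Path_add1} its balanced polynomial equals $(2k-1)^{m-2}\,\chi_{B_{l}(m,n-1)}^{b}(2k)$. The contraction graph is again a book in which the $n$-th page has been shortened to an $(m-1)$-cycle that is still entirely positive. I would then iterate edge deletion-contraction on the positive edges $e_2=uu_2^n,\ e_3=uu_3^n,\ldots$ of this shortened page, each step peeling off a pendant path (handled by Lemma~\ref{Path_add1}) and shortening the remaining cycle by one. Because the negative edges stay fixed on pages $1,\ldots,l$ throughout, every intermediate deletion reduces to a power of $(2k-1)$ times $\chi_{B_{l}(m,n-1)}^{b}(2k)$, and the terminal contraction collapses page $n$ into a redundant positive parallel edge, leaving exactly $B_{l}(m,n-1)$.

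Collecting the $m-1$ terms produced by the successive deletions with their alternating signs then yields
\[
\chi_{B_{l}(m,n)}^{b}(2k)=\Big[\sum_{i=0}^{m-2}(-1)^{i}(2k-1)^{(m-2)-i}\Big]\chi_{B_{l}(m,n-1)}^{b}(2k),
\]
as claimed. Note that, in contrast with the $B_1^{*}$ recursion of Theorem~\ref{B_{1}*-1}, no extra $\chi_{B_m^{n-1}}^{b}$ term appears: here the common edge $uv$ is positive, so the final parallel edge produced by the last contraction is balanced and absorbs harmlessly into the book $B_{l}(m,n-1)$, exactly as in the unsigned situation of Theorem~\ref{B_1}.

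The only genuinely delicate point is the bookkeeping of signs and exponents at each stage, so that the alternating geometric sum comes out in the stated form. The structural input is immediate: positivity of every contracted edge rests on $l\le\lceil n/2\rceil\le n-1$, and the balanced path identity $\chi_{P_{l+1}}^{b}(2k)=2k(2k-1)^{l}$ plays the role that $\chi_{P_{l+1}}(2k+1)=(2k+1)(2k)^{l}$ played in Section~\ref{chrom-poly}. Thus the main obstacle is purely the careful transcription of the deletion-contraction recursion, and no new idea beyond the $(2k)\mapsto(2k-1)$ substitution is needed.
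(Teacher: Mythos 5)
Your proposal is correct and follows exactly the route the paper intends: the paper omits this proof as ``analogous to Theorem~\ref{B_l}'', which in turn repeats the deletion--contraction iteration of Theorem~\ref{B_1}, with Lemma~\ref{Path_add1} replacing Lemma~\ref{Path_add} and $(2k-1)$ replacing $(2k)$. Your additional observations --- that $l\le\lceil n/2\rceil\le n-1$ keeps page $n$ all-positive so Theorem~\ref{E-C} applies at every step, and that the terminal positive digon is redundant (unlike in the $B_1^{*}$ case) --- are exactly the points the paper leaves implicit.
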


\begin{theorem} \label{B_l*-1}
For $n \geq 2$ and $2 \leq l \leq \lceil \frac{n}{2} \rceil$, the balanced chromatic polynomial of $B^{*}_{l}(m,n)$ is given by  
\[ \chi_{B^{*}_{l}(m,n)}^{b}(2k) = \Big[ \sum_{i=0}^{m-2} (-1)^{i} (2k-1)^{(m-2)-i} \Big] \chi_{B^{*}_{l-1}(m,n-1)}^{b}(2k).\]
\end{theorem}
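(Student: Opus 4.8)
**Proof proposal for Theorem~\ref{B_l*-1}.**

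The plan is to imitate exactly the argument used for the unbalanced chromatic polynomial in Theorem~\ref{B_l*}, simply transporting every step into the balanced (zero-free) setting. The essential observation is that all the ingredients that made the unbalanced proof work have balanced analogues already recorded in this section: the edge deletion-contraction formula (Theorem~\ref{E-C}) holds verbatim for $\chi^b$, the path-attachment identity is now Lemma~\ref{Path_add1}, and, most importantly, the balanced chromatic polynomial is switching invariant, as recalled at the start of Section~\ref{balanced-poly} (and proved in~\cite{Zaslavsky2}). Thus I would first state that $B^{*}_{l}(m,n)$ is switching equivalent to $B_{l'}(m,n)$ with $l' = (n+1)-l$, exactly as in the proof of Theorem~\ref{B_l*}; this is a purely combinatorial fact about signatures on the book graph and does not depend on which chromatic polynomial we use.

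From switching invariance of $\chi^b$, I would conclude $\chi^{b}_{B^{*}_{l}(m,n)}(2k) = \chi^{b}_{B_{l'}(m,n)}(2k)$. Next I would invoke the balanced recursion for $B_{l'}(m,n)$, namely Theorem~\ref{B_l-1}, to write
\[
\chi^{b}_{B_{l'}(m,n)}(2k) = \Big[ \sum_{i=0}^{m-2} (-1)^{i} (2k-1)^{(m-2)-i} \Big] \chi^{b}_{B_{l'}(m,n-1)}(2k).
\]
Then, applying switching at $u$ once more to the graph $B_{l'}(m,n-1)$, I would identify it with $B^{*}_{n-l'}(m,n-1)$, and since $n - l' = l-1$, switching invariance gives $\chi^{b}_{B_{l'}(m,n-1)}(2k) = \chi^{b}_{B^{*}_{l-1}(m,n-1)}(2k)$. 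Substituting this back yields the claimed identity. In short, the proof is the word-for-word balanced transcription of Theorem~\ref{B_l*}, with $(2k)$ replaced by $(2k-1)$ inside the bracketed sum and $\chi$ replaced by $\chi^b$ throughout.

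Since the author explicitly states that all proofs in this portion of the section are omitted as being ``analogous to the corresponding proofs in Section~\ref{chrom-poly},'' the only thing one must genuinely verify is that no step of Theorem~\ref{B_l*} secretly used a property special to the unbalanced polynomial. The one point deserving care — and the place I would expect a careful referee to look — is the switching invariance of $\chi^b$: the ordinary chromatic polynomial $\chi$ and the balanced polynomial $\chi^b$ genuinely differ for unbalanced signed graphs, so one must be sure that invariance under \emph{switching} (not isomorphism) still holds for $\chi^b$. It does, because switching is realized by a sign function $f : V \to \{\pm 1\}$ and the coordinate change $x_v \mapsto f(v) x_v$ is a symmetry of the full arrangement $BC_n$ that also preserves each coordinate hyperplane $h_v$; hence it preserves $B^{0}_{(G,\sigma)}$ and the count of proper zero-free colorings is unchanged. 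Once this is noted, the remaining steps are the same switching identities on book signatures used in Theorem~\ref{B_l*}, and no further obstacle arises.
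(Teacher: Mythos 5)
Your proposal is correct and follows exactly the route the paper intends: the paper omits this proof as ``analogous to the corresponding proof in Section~\ref{chrom-poly},'' and that corresponding proof (Theorem~\ref{B_l*}) is precisely the switching argument you transcribe, using switching invariance of the balanced polynomial together with the balanced recursion of Theorem~\ref{B_l-1}. Your extra remark justifying why switching invariance persists for the zero-free polynomial is a worthwhile addition rather than a deviation.
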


\section{Conclusion}

We mention some basic applications of signatures on Book graph. Note that $B(4,n)$ resembles a book as we read in our daily life with $n$ number of pages. We all have our own ways of keeping our mind in each page of a book before closing it, but often that method is not reliable. However, an easy solution is to put a bookmark where we left off. So a signature of size one of the form $uu_{i}^{1}$, where $1 \leq i \leq n$, can be used as a bookmark to make sure that we pick where we left off. Similarly, a signature of size two of the form $\{uu_{i}^{1}, uu_{j}^{1} \}$, where $1 \leq i < j \leq n $, can be used for two bookmarks where one denotes the starting point of reading and second denote the place where we left off.

\end{document}